\newtheorem{theorem}{Theorem}[section]
\newtheorem{lemma}[theorem]{Lemma}
\newtheorem{proposition}[theorem]{Proposition}
\newtheorem{remark}[theorem]{Remark}
\newtheorem{corollary}[theorem]{Corollary}
\newtheorem{condition}[theorem]{Condition}
\newcommand{\C}{\mathbb{C}}
\newcommand{\Pt}{\textbf{Pt}}
\title{Nice exact categories are coexact}
\author{J. R. A. Gray}
\begin{document}
\maketitle
\begin{abstract}
Several important types of categories have been shown to be both
exact and coexact (in the sense of Barr). 
The first type consists of abelian categories, which due to their self-dual
definition, can be seen to be both exact and coexact 
by Tierney's characterization of them as additive exact categories. 
The next type consists of elementary toposes which are well-known to be exact, but
have also been shown to be coexact and coprotomodular by Bourn. 
In this paper we 
study a condition weaker than extensivity and equivalent to additivity for pointed categories. 
We show that for a finitely cocomplete category this condition together 
with exactness implies coexactness and coprotomodularity. As a special case
we obtain that a finitely cocomplete pretopos is coexact.

\end{abstract}
\section{Introduction}
This paper was in some sense inspired by the question, ``which
conditions together with exactness imply coexactness?''
The Tierney ``equation''
\[(\text{exact}) + (\text{additive})=(\text{abelian}) \]
immediately gives one answer to this question,
since the opposite category of an abelian category is abelian. One
of the aims of this paper, is to explain that extensivity is also an answer
to the question, subject to the existence of certain colimits, so that every
finitely cocomplete pretopos is coexact. In fact, we show that there
is a condition generalizing extensivity and equivalent to
additivity for pointed categories, which, together with exactness, implies coexactness,
provided certain colimits exist. In addition to this, we show
that every finitely cocomplete pretopos is co-ideally-exact, and co-arithmetical,
and that our above mentioned condition also implies coprotomodularity.
The former result recovers D. Bourn's results, showing that the dual
of a topos is exact, protomodular and arithmetical \cite{BOURN:2004b}. It
also recovers Theorem 1.3 of \cite{BORCEUX_CLEMENTINO:2025}, 
showing that the opposite category of pointed Hausdorff spaces is semi-abelian.

Let us begin by recalling the necessary background as well as
introducing the above-mentioned condition so that we can
explain more precisely what our aims are. 
A category $\C$ is regular \cite{BARR:1971} if it is finitely complete, admits
coequalizers of kernel pairs, and the class of regular epimorphisms is pullback
stable in it. A regular category $\C$ is called exact \cite{BARR:1971} when equivalence relations are effective,
that is, (the projections of) (internal) equivalence
relations are kernel pairs in it. A category $\C$ is extensive \cite{CARBONI_LACK_WALTERS:1993} if it admits finite coproducts and 
for each pair of objects $A$ and $B$ the coproduct functor
\begin{equation}
	\tag{D1}
	\label{extensive_equiv}
\vcenter{
\xymatrix@R=0.5ex{
	(\C\downarrow A)\times (\C\downarrow B) \ar[r] & (\C\downarrow A+B)\\
	((X,f),(Y,g)) \ar@{|->}[r]\ar[dddd]_{(u,v)} & (X+Y,f+g)\ar[dddd]^{u+v}\\
\\ \phantom{aaaa}\ar@{|->}[r] & \phantom{aaaa}\\ \\
	((X',f'),(Y',g')) \ar@{|->}[r] & (X'+Y',f+g')
}
}
\end{equation}
is an equivalence of categories. This turns out to be equivalent to 
$\C$ having finite coproducts, pullbacks of coproduct
	inclusions along arbitrary morphisms existing in $\C$, and for each diagram
\[
\xymatrix{
	A \ar[r]^{i}\ar[d]^{f} & C\ar@{}[dl]|{(3)}\ar@{}[dr]|{(4)}\ar[d]^{h} & B\ar[l]_{j}\ar[d]^{g} & \text{\scriptsize{(1)}}\\
A '\ar[r]_{i'} & C & B',\ar[l]^{j'} & \text{\scriptsize{(2)}}
}
\]
in $\C$,
where (2) is a coproduct, (1) is a coproduct if and only if (3) and (4) are pullbacks.
A category is a pretopos if it is exact and extensive. A category $\C$ is lextensive
if it is finitely complete and extensive.
A pointed category with binary products
is additive, when each hom-set is equipped with an abelian group structure, such that addition is 
bilinear with respect to composition. A category is abelian when it is pointed, admits finite products and coproducts, has
kernels and cokernels, and every monomorphism and epimorphism is normal.

Recall that for an object $B$, the category $\Pt_{\C}(B)$ has objects
triples $(A,\alpha,\beta)$ where $A$ is an object in $\C$ and $\alpha: A\to B$ and $\beta : B\to A$
are morphisms in $\C$ such that $\alpha \beta=1_B$. A morphism
$f:(A,\alpha,\beta)\to (A',\alpha',\beta')$ in $\Pt_{\C}(B)$ is a morphism $f:A\to A'$ such that 
$\alpha' f =  \alpha$ and $f\beta =\beta'$. Under the assumption that pullbacks of split
epimorphisms along arbitrary morphisms exist, each morphism $p:E\to B$ determines a pullback functor
$p^* : \Pt_{\C}(B) \to \Pt_{\C}(E)$. This functor sends an object $(A,\alpha,\beta)$ to 
$(E\times_B A,\pi_1,\langle 1_E,\beta p\rangle)$, where $(E\times_B A,\pi_1,\pi_2)$ is the pullback of $p$ 
and $\alpha$, and $\langle 1_E,\beta p\rangle$ is the unique morphism such that
$\pi_1\langle 1_E,\beta p\rangle=1_E$ and $\pi_2\langle 1_E,\beta p\rangle =\beta p$. The functor $p^*$ 
sends $f:(A,\alpha,\beta)\to (A',\alpha',\beta')$ to the unique morphism 
$p^*(f): (E\times_B A,\pi_1,\langle 1_E,\beta p\rangle) \to (E\times_B A',\pi'_1,\langle 1_E,\beta' p\rangle)$
such that $\pi'_1 p^*(f) = \pi_1$ and $\pi'_2 p^*(f)= f\pi_2$.
A category is protomodular \cite{BOURN:1991} if pullbacks of split epimorphisms along arbitrary morphisms
exist and each functor $p^*$ reflects isomorphisms. It is well-known that when a category $\C$ has
finite limits and an initial object, protomodularity is equivalent to requiring that for each morphism $p:0\to B$,
the functor $p^{*}$ reflects isomorphisms for monomorphisms.
A category is semi-abelian \cite{JANELIDZE_MARKI_THOLEN:2002} if it is pointed, exact, protomodular, and finitely cocomplete.
A category is ideally exact \cite{JANELIDZE:2024}, when it satisfies the same conditions as 
semi-abelian, except pointedness is weakened to the requiring that the unique morphism $0\to 1$ is a regular epimorphism.

For a category $\C$ with finite limits, we consider the following condition:
\begin{condition}
\label{conditions}
\begin{enumerate}[(a)]
\item finite coproducts exist in $\C$;
\item binary coproducts of pullback squares are pullback squares in $\C$;
\item for each monomorphism $m: S\to A$ in $\C$, the square
\[
\xymatrix{
S+S \ar[d]_{[1_S,1_S]} \ar[r]^{m+1_S} & A+S\ar[d]^{[1_A,m]}\\
S \ar[r]_{m} & A
}
\]
is a pullback;
\item for each regular epimorphism $p:E\to B$ in $\C$, the diagram
\[
\xymatrix{
E+E \ar[d]_{[1_E,1_E]} \ar[r]^{p+p} & B+B\ar[d]^{[1_B,1_B]}\\
E \ar[r]_{p} & B
}
\]
is a feeble pullback, by which we mean that the unique morphism into the pullback is
a regular epimorphism.
\end{enumerate}
\end{condition}
The main objectives of the paper are to prove the following:
\begin{enumerate}[(i)]
\item Condition \ref{conditions} 
holds for any lextensive category.
\item Condition \ref{conditions} 
is equivalent to additivity for pointed categories.
\item An exact category $\C$ satisfying Condition \ref{conditions}
has the following properties:
\begin{enumerate}
\item pushouts of monomorphisms exist in $\C$ and are pullbacks;
\item monomorphisms are stable under pushouts in $\C$;
\item monomorphisms are regular monomorphisms in $\C$ and hence $\C$ is balanced;
\item co-reflexive-relations in $\C$ are co-effective-equivalence-relations;
\item $\C$ is coprotomodular. 
\end{enumerate}
\item An exact category $\C$ satisfying Condition \ref{conditions}, admitting pushouts
of regular epimorphisms, is coexact.
\item A finitely cocomplete pretopos is co-ideally-exact and co-arithmetical.
\end{enumerate}

\section{Lextensive categories}
In this section, we prove that lextensive categories satisfy Condition \ref{conditions}.
We also show for a finitely complete category, that extensivity can be characterised 
via conditions closely related to Conditions \ref{conditions} (a)-(d)
(see Proposition \ref{extensivity:char} below).
\begin{lemma}\label{lemma:1}
For each morphism $f:A \to B$ in a lextensive category, the diagram
\begin{equation}
\tag{D2}
\label{codiagonal_diagram}
\vcenter{
\xymatrix{
A + A \ar[r]^-{[1_A,1_A]}\ar[d]_{f+f} & A\ar[d]^{f}\\
B+B\ar[r]_-{[1_B,1_B]} & B
}
}
\end{equation}
is a pullback.
\end{lemma}
\begin{proof}
Consider the diagram 
\[
\xymatrix{
	A \ar[d]_{f}\ar[r]_{i_s}\ar@/^3ex/[rr]^{1_A} & P \ar[r]^{q}\ar[d]_{p} & A\ar[d]^{f} \\
	B \ar[r]_{\iota_s} & B+B \ar[r]_{[1_B,1_B]} & B
}
\]
where the right hand square is a pullback, and $i_s$ is the unique morphism making the
	diagram commute ($s\in \{1,2\}$). By extensivity we know that $(P,i_1,i_2)$ is a coproduct, and hence,
since $p i_s=\iota_s f$ and $q i_s = 1_A$, the claim follows.
\end{proof}
Since the functors \eqref{extensive_equiv} are equivalences it follows that:
\begin{lemma}\label{lemma:2}
	Binary coproducts of pullbacks are pullbacks in an extensive category.\qed
\end{lemma}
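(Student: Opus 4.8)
The plan is to reinterpret pullback squares as products in slice categories and then exploit the fact that an equivalence of categories preserves products. First I would recall the standard observation that a commuting square
\[
\xymatrix{
P \ar[r]^{q}\ar[d]_{p} & X \ar[d]^{f} \\
Y \ar[r]_{g} & C
}
\]
is a pullback in $\C$ precisely when $(P,fq)$, equipped with the legs $p$ and $q$, is a product of $(X,f)$ and $(Y,g)$ in the slice category $\C\downarrow C$. In other words, pullbacks over $C$ and products in $\C\downarrow C$ are exactly the same data.

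Next I would take two pullback squares, one over $A$ with corners $P,X,Y,A$ and one over $B$ with corners $P',X',Y',B$. By the previous paragraph these present $(P,\cdot)$ as a product of $(X,f)$ and $(Y,g)$ in $\C\downarrow A$, and $(P',\cdot)$ as a product of $(X',f')$ and $(Y',g')$ in $\C\downarrow B$. Since products in a product category are computed componentwise, it follows that $((P,\cdot),(P',\cdot))$ is the product of $((X,f),(X',f'))$ and $((Y,g),(Y',g'))$ in $(\C\downarrow A)\times(\C\downarrow B)$.

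Now I would apply the coproduct functor in \eqref{extensive_equiv}. Because it is an equivalence it preserves all limits, in particular products. It sends $((X,f),(X',f'))$ to $(X+X',f+f')$, sends $((Y,g),(Y',g'))$ to $(Y+Y',g+g')$, and sends their product $((P,\cdot),(P',\cdot))$ to $(P+P',\cdot)$. Preservation of products therefore exhibits $(P+P',\cdot)$ as a product of $(X+X',f+f')$ and $(Y+Y',g+g')$ in $\C\downarrow(A+B)$. Translating back through the first paragraph, this says exactly that the coproduct square with corners $P+P'$, $X+X'$, $Y+Y'$, $A+B$ is a pullback, which is the claim.

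The argument carries essentially no conceptual difficulty, since equivalences automatically preserve products; the only work is bookkeeping. I expect the main (minor) obstacle to be confirming that the structure maps line up correctly — that the two legs of each original square really are the product projections in the relevant slice, and that the structure morphism $P+P'\to A+B$ obtained as a slice product over $A+B$ agrees with the one induced by $+$ — but this is forced by the functoriality of the coproduct together with the uniqueness of products.
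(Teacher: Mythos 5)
Your proof is correct and is exactly the argument the paper intends: the paper derives this lemma in one line from the fact that the functors \eqref{extensive_equiv} are equivalences, and your identification of pullbacks over $C$ with binary products in $\C\downarrow C$, followed by preservation of products under the equivalence, is precisely the reasoning left implicit there. You have simply supplied the bookkeeping the paper omits.
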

\begin{lemma}\label{lemma:3}
	For morphisms $f:A\to X$, $g:B\to X$, $h:C\to X$ in a
	lextensive category, the diagram
	\[
		\xymatrix@C=10ex{
			P+R \ar[r]^-{[p_2,r_2]} \ar[d]_{p_1+r_1} & C\ar[d]^{h}\\
			A+B\ar[r]_{[f,g]} & X
		}
	\]
	is a pullback,
	where $(P,p_1,p_2)$ and  $(R,r_1,r_2)$ are
	pullbacks of $f$ and $h$, and $g$ and $h$,
	respectively.
\end{lemma}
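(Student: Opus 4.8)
The plan is to realize the given square as a horizontal pasting of two squares, each of which is already known to be a pullback by the preceding lemmas, and then to invoke the pasting lemma for pullbacks. The key observation is that both horizontal edges factor through codiagonals: we have $[p_2,r_2]=[1_C,1_C]\circ(p_2+r_2)$ and $[f,g]=[1_X,1_X]\circ(f+g)$, so the square in question is exactly the outer rectangle of
\[
\xymatrix{
P+R \ar[r]^{p_2+r_2} \ar[d]_{p_1+r_1} & C+C \ar[r]^{[1_C,1_C]} \ar[d]^{h+h} & C \ar[d]^h \\
A+B \ar[r]_{f+g} & X+X \ar[r]_{[1_X,1_X]} & X.
}
\]

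It then remains to identify each inner square as a pullback. The right-hand square is a pullback by Lemma \ref{lemma:1} applied to the morphism $h:C\to X$. The left-hand square is precisely the binary coproduct of the two given pullback squares --- the pullback of $f$ along $h$, with corner $P$, and the pullback of $g$ along $h$, with corner $R$ --- so its edges are $p_1+r_1$, $p_2+r_2$, $f+g$ and $h+h$, and it is a pullback by Lemma \ref{lemma:2}. Since both inner squares are pullbacks, the pasting lemma for pullbacks shows the outer rectangle is a pullback, and the factorizations recorded above confirm that its top and bottom edges are indeed $[p_2,r_2]$ and $[f,g]$, while its vertical edges are $p_1+r_1$ and $h$.

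The only genuine content is spotting the factorization through the codiagonals; once the diagram is set up, the rest is formal. I expect the main (minor) obstacle to be bookkeeping: verifying that the coproduct square produced by Lemma \ref{lemma:2} is assembled from the two given pullback squares in the correct orientation, so that its edges match $p_1+r_1$, $p_2+r_2$, $f+g$ and $h+h$ rather than some transposed variant, and that the composite edges of the pasted rectangle simplify to exactly the maps appearing in the statement.
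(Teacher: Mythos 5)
Your proof is correct and is essentially identical to the paper's: both decompose the square as the pasting of the coproduct of the two given pullbacks (a pullback by Lemma \ref{lemma:2}) with the codiagonal square for $h$ (a pullback by Lemma \ref{lemma:1}), then apply the pasting lemma.
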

\begin{proof}
	According to Lemmas \ref{lemma:1} and \ref{lemma:2}, each of the squares in the diagram
	\[
		\xymatrix{
			P+R \ar[r]^-{p_2+r_2}\ar[d]_{p_1+r_1}& C+C \ar[r]^-{[1_{C},1_{C}]}\ar[d]^{h+h} & C\ar[d]^{h}\\
			A+B \ar[r]_{f+g}&X+X \ar[r]_-{[1_X,1_X]} & X
		}
	\]
	is a pullback, and hence so is the desired diagram.
\end{proof}
As an immediate consequence we obtain:
\begin{lemma}\label{lemma:3.5}
For a monomorphism $m:S\to A$ in a
lextensive category, the diagram
\[
\xymatrix@C=10ex{
S+S \ar[r]^-{[1_S,1_S]} \ar[d]_{m+1_S} & S\ar[d]^{m}\\
A+S\ar[r]_{[1,m]} & A
}
\]
is a pullback.\qed
\end{lemma}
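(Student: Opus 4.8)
The plan is to obtain this as a direct specialization of Lemma \ref{lemma:3}. First I would apply that lemma with $X=A$, $C=S$, $f=1_A\colon A\to A$, and $g=h=m\colon S\to A$. With these choices the bottom morphism $[f,g]$ becomes $[1_A,m]$ and the right-hand morphism $h$ becomes $m$, so the outer frame of the diagram in Lemma \ref{lemma:3} already matches the frame of the square we wish to show is a pullback.

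Next I would identify the two pullbacks appearing in the statement of Lemma \ref{lemma:3}. The pullback $(P,p_1,p_2)$ of $f=1_A$ and $h=m$ may be taken to be $S$ itself with $p_1=m$ and $p_2=1_S$, since pulling back along an identity morphism recovers the opposite leg. The pullback $(R,r_1,r_2)$ of $g=m$ and $h=m$ is the kernel pair of $m$; because $m$ is a monomorphism, this kernel pair is $S$ with $r_1=r_2=1_S$. These two facts are the only computations involved, and each is standard.

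Substituting these identifications, the object $P+R$ becomes $S+S$, the left-hand morphism $p_1+r_1$ becomes $m+1_S$, and the top morphism $[p_2,r_2]$ becomes $[1_S,1_S]$. Thus the pullback square delivered by Lemma \ref{lemma:3} is exactly the square in the statement, which is therefore a pullback. There is no genuine obstacle here: the argument is simply a matter of recognising that pulling back along an identity is trivial and that the kernel pair of a monomorphism is diagonal, which is precisely why the result is flagged as an immediate consequence.
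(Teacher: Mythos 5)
Your proposal is correct and matches the paper's approach exactly: the paper states this lemma as an immediate consequence of Lemma \ref{lemma:3}, and your specialization ($f=1_A$, $g=h=m$, with the pullback along an identity being trivial and the kernel pair of a monomorphism being diagonal) is precisely the intended instantiation.
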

Combining Lemmas \ref{lemma:1}, \ref{lemma:2}, and \ref{lemma:3.5} we obtain: 
\begin{proposition}
Every lextensive category satisfies Condition \ref{conditions}.\qed 
\end{proposition}
In fact lextensivity is closely related to Condition \ref{conditions}:
\begin{proposition}\label{extensivity:char}
A finitely complete category $\C$ with initial object is extensive, if and only if
Condition \ref{conditions} (a) and (b) hold, and for each morphism $f:A\to B$ in $\C$, the
diagram \eqref{codiagonal_diagram} is a pullback.
\end{proposition}
\begin{proof}
Combining the previous results in this section, we see that it remains only to prove the ``if'' part of the statement.
If $i:I\to 0$  is a morphism in $\C$, then the diagram
\[
\xymatrix{
I+I\ar[r]^-{[1_I,1_I]} \ar[d]_{i+i}& I\ar[d]^{i}\\
0+0 \ar[r]_-{[1_0,1_0]} & 0
}
\]
is a pullback, and hence $[1_I,1_I]$ is an isomorphism. Therefore, the diagram
\[
\xymatrix{
I\ar[r]^{1_I} & I & I\ar[l]_{1_I}
}
\]
forms a coproduct, and hence there is at most one morphism from
$I$ to any object. However, since $I$ is trivially a weak initial
object, it must in fact be an initial object. It follows that $0$ is a strict
initial object. Now suppose $f:A\to A'$ and $g:B\to B'$ are morphisms
in $\C$.  Since the diagrams
\[
\xymatrix{
A\ar[d]_{f} \ar[r]^{1_A} & A\ar[d]^{f}\\
A'\ar[r]_{1_{A'}} & A'
}
\xymatrix{
0\ar[d] \ar[r] & B\ar[d]^{g}\\
0\ar[r] & B'
}
\]
are pullbacks, it follows that their coproduct
\[
\xymatrix{
A \ar[r]^-{\iota_1}\ar[d]_{f} & A+B \ar[d]^{f+g}\\
A' \ar[r]_-{\iota_1} & A'+B'
}
\]
is also a pullback.
Now suppose that the bottom row in the diagram
\[
\xymatrix{
	A \ar[r]^{i}\ar[d]^{f} & C\ar@{}[dl]|{(3)}\ar@{}[dr]|{(4)}\ar[d]^{h} & B\ar[l]_{j}\ar[d]^{g} & \text{\scriptsize{(1)}}\\
A '\ar[r]_{i'} & C & B'\ar[l]^{j'} & \text{\scriptsize{(2)}}
}
\]
is a coproduct, and that (3) and (4) are pullbacks. This means that the left hand square in the
diagram
\[
\xymatrix@C=8ex{
A+B \ar@/^3ex/[rr]^{[i,j]}\ar[r]_{i+j}\ar[d]_{f+g} & C+C\ar[d]^{h+h} \ar[r]_{[1_C,1_C]} & C\ar[d]^{h}\\
A'+B'\ar@/_3ex/[rr]_{[i',j']} \ar[r]^{i'+j'} & C'+C' \ar[r]^{[1_{C'},1_{C'}]} & C'
}
\]
is a pullback. Since the right hand square is also a pullback, it follows that the outer arrows form a pullback.
Noting that the $[i',j']$ is an isomorphism, we see that $[i,j]$ is too, and hence, (1) is a coproduct as desired.
\end{proof}
Recall that in a regular category with binary coproducts, the join of monomorphisms $s:S\to A$ and $t:T\to A$
can be constructed as the monomorphism forming part of the (regular-epi,mono)-factorization of
$[s,t]:S+T\to A$. Recall also that a regular category is coherent, if each $\textnormal{Sub}(A)$ has finite joins
which are preserved by each change-of-base morphism $f^{*}:\textnormal{Sub}(B)\to \textnormal{Sub}(A)$.
We end this section by recovering the well-known fact that every regular
extensive category is coherent, which in turn implies that each $\textnormal{Sub}(A)$ is a distributive lattice
(see \cite{JOHNSTONE:2002}).
\begin{proposition}\label{proposition:pretoposes_coherent}
Every regular extensive category is coherent.
\end{proposition}
\begin{proof}
Suppose $f:A\to B$ is a morphism in a regular extensive category, and $u:S\to B$ and $v:T\to B$
are monomorphisms. If 
\[
\xymatrix{
\bar S \ar[d]_{\bar u} \ar[r]^{h} & S\ar[d]^{u}\\
A\ar[r]_{f} & B &
}
\xymatrix{
\bar T \ar[d]_{\bar v} \ar[r]^{i} & T\ar[d]^{v}\\
A\ar[r]_{f} & B
}
\]
are pullbacks, then by Lemma \ref{lemma:3} we know that the outer arrows in the diagram
\[
\xymatrix@C=2ex@R=3ex{
\bar S + \bar T \ar[dd]_{[\bar u,\bar v]} \ar[rrrr]^{h+i}\ar[dr]^{\bar e} &&&& S+T\ar[dd]^{[u,v]}\ar[dl]_{e}\\
&\bar S\vee \bar T \ar[rr]^{f'}\ar[dl]^{\bar m} && S\vee T\ar[dr]_{m}&\\
A\ar[rrrr]_{f} &&&& B
}
\]
form a pullback. We complete diagram by taking (regular-epi,mono)-factorization of
the right hand vertical morphism and pulling it back to produce (by regularity) the 
(regular-epi,mono)-factorization of the left morphism.
The claim now follows from the above argument, and the fact that zero objects are strict
and hence preserved by $f^*$, in an extensive category.
\end{proof}
\section{Additive categories}
In this section we prove that a pointed category is additive if and only if it satisfies Condition \ref{conditions}.
Note that in this section, we will largely use $\oplus$ to denote coproduct (which is also product), and tend
to reserve $+$ and $-$
for addition and subtraction of morphisms. For objects $A$, $B$, $C$, $D$, and morphisms 
$f:A\to C$, $g:A\to D$, $h:B\to C$ and $i:B\to D$ in an additive category, we write (when convenient) 
$\tiny{\begin{bmatrix} f\\ g\end{bmatrix}}$ for the morphism $\langle f,g\rangle: A\to C\oplus D$ 
and $\tiny{\begin{bmatrix} f & h \\ g & i\end{bmatrix}}$ for the morphism
$\tiny{\begin{bmatrix}[f,h]\\ [g,i]\end{bmatrix}} = \tiny{[\begin{bmatrix}f\\g\end{bmatrix},\begin{bmatrix} h\\i\end{bmatrix}]}:A\oplus B\to C\oplus D$.
Note also that $f\oplus i = \tiny{\begin{bmatrix} f & 0 \\ 0 & i \end{bmatrix}}$.
\begin{lemma}\label{lemma:a1}
Let $\C$ be an additive category. For each monomorphism $m:S\to A$, the diagram 
\[
\xymatrix@C=10ex{
S\oplus S \ar[r]^-{[1_S,1_S]} \ar[d]_{m\oplus 1_S} & S\ar[d]^{m}\\
A\oplus S\ar[r]_{[1,m]} & A
}
\]
is a pullback.\end{lemma}
\begin{proof}
Suppose that $u:W\to S$ and $v=\tiny{\begin{bmatrix} v_1\\ v_2\end{bmatrix}} : W \to A\oplus S$ are morphisms
such that $[1,m]v=mu$. This means that $v_1 + mv_2=mu$ and hence
$v_1 = m(u-v_2)$. It follows that $(m\oplus 1_S)\tiny{\begin{bmatrix} u-v_2\\v_2\end{bmatrix}} =\tiny{\begin{bmatrix}m & 0 \\ 0 & 1_S\end{bmatrix}}\tiny{\begin{bmatrix} u-v_2\\v_2\end{bmatrix}}= \tiny{\begin{bmatrix} v_1\\v_2\end{bmatrix}}$
and $[1_S,1_S]\tiny{\begin{bmatrix} u-v_2\\v_2\end{bmatrix}}=u$. Uniqueness follows from the fact that $m \oplus 1_S$ is a monomorphism.
\end{proof}
\begin{lemma}\label{lemma:a2}
Let $\C$ be an additive category. For each regular epimorphism $p:E\to B$ in $\C$, the diagram 
\[
\xymatrix{
E \oplus E \ar[r]^-{[1_E,1_E]}\ar[d]_{p\oplus p} & E\ar[d]^{p}\\
B\oplus B\ar[r]_-{[1_B,1_B]} & B
}
\]
is a feeble pullback.
\end{lemma}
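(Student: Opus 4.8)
The plan is to compute the comparison morphism into the pullback explicitly, and then to recognise it as the composite of an isomorphism with the biproduct $1_A \oplus f$ of $f$ and an identity; that this biproduct is a regular epimorphism is the real content.

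First I would produce a convenient model of the pullback $P$ of $[1_B,1_B]$ and $f$. I claim that $P = A \oplus B$ works, with the first projection $\sigma : A\oplus B \to A$ playing the role of the projection onto $A$ and $\langle \tau, f\sigma - \tau\rangle : A\oplus B\to B\oplus B$ (where $\tau$ is the second projection) the other. Indeed $[1_B,1_B]\langle \tau, f\sigma-\tau\rangle = \tau + (f\sigma - \tau) = f\sigma$, and, given $\langle g_1,g_2\rangle : W\to B\oplus B$ and $h:W\to A$ with $g_1 + g_2 = fh$, the pair factors uniquely through $\langle h,g_1\rangle : W \to A\oplus B$, exactly as in the previous lemma.

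With this model the comparison morphism $\phi:A\oplus A\to A\oplus B$ induced by $[1_A,1_A]$ and $f\oplus f$ is forced by $\sigma\phi = \pi_1+\pi_2$ and $\tau\phi = f\pi_1$, so that $\phi = \langle \pi_1+\pi_2,\ f\pi_1\rangle$. The key observation is that $\phi = (1_A\oplus f)\,\psi$, where $\psi = \langle \pi_1+\pi_2,\ \pi_1\rangle : A\oplus A\to A\oplus A$ is an isomorphism with inverse $\langle \pi_2,\ \pi_1-\pi_2\rangle$ (a one-line check with the additive structure).

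It therefore remains to show that $1_A\oplus f$ is a regular epimorphism, which I expect to be the main obstacle. Being a regular epimorphism in an additive category, $f$ is a coequalizer, hence a cokernel, say $f = \operatorname{coker}(k)$ for some $k:K\to A$. I would then verify directly that $1_A\oplus f$ enjoys the universal property of $\operatorname{coker}(\iota_2 k)$, where $\iota_2:A\to A\oplus A$ is the second coproduct inclusion: a morphism $[h_1,h_2]:A\oplus A\to X$ satisfies $[h_1,h_2]\iota_2 k = h_2 k = 0$ precisely when $h_2$ factors (uniquely) through $f$, and this yields a unique factorization of $[h_1,h_2]$ through $1_A\oplus f$ because $f$ is epic. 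Thus $1_A\oplus f$ is a cokernel and so a regular epimorphism; since $\psi$ is an isomorphism, the comparison $\phi$ is a regular epimorphism as well, which is exactly the statement that the square is a feeble pullback.
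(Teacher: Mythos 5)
Your proof is correct and takes essentially the same route as the paper: both realise the pullback of $[1_B,1_B]$ along $f$ explicitly as $A\oplus B$ via an additive shear, and factor the comparison morphism $A\oplus A\to A\oplus B$ as an isomorphism composed with $1_A\oplus f$. The only real difference is that the paper simply asserts that $1_A\oplus f$ is a regular epimorphism, whereas you prove it (regular epimorphisms in an additive category are cokernels, and $1_A\oplus f=\operatorname{coker}(\iota_2 k)$ when $f=\operatorname{coker}(k)$), which is a step worth making explicit.
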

\begin{proof}
Since $\tiny{\begin{bmatrix} 1_B & -1_B \\ 0 & 1_B\end{bmatrix}}:B\oplus B\to B\oplus B$ is an isomorphism, it easily follows that the diagram 
\[
\xymatrix{
	E \oplus B \ar[r]^-{[1_E,0]}\ar[d]_{\tiny{\begin{bmatrix} 1_B & -1_B \\ 0 & 1_B\end{bmatrix}}(p\oplus 1_E)=\tiny\begin{bmatrix} p & -1_E \\ 0 & 1_E\end{bmatrix}} & E\ar[d]^{p}\\
B\oplus B\ar[r]_-{[1_B,1_B]} & B
}
\]
is a pullback.  The claim now follows, since ${\tiny (1_E \oplus p)\begin{bmatrix} 1_E & 1_E \\ 0 & 1_E\end{bmatrix}}$ is the desired induced morphism (which is a regular epimorphism since it is the composite of a regular epimorphism and an
isomorphism).
\end{proof}
Since binary coproduct and product coincide, and the binary product of two pullbacks is necessarily
a pullback, it follows that:
\begin{lemma}\label{lemma:a3}
Binary coproducts of pullbacks are pullbacks in an additive category.\qed
\end{lemma}
\begin{proposition}
A pointed category $\C$ is additive if and only if it satisfies Condition \ref{conditions}.\qed
\end{proposition}
\begin{proof}
Combining Lemmas \ref{lemma:a1}, \ref{lemma:a2} and \ref{lemma:a3}, we see that it only remains to prove the ``if'' part.
Note that we will use $+$ to denote coproduct until we establish that we have biproducts.
Since the diagram on the left and in the middle of Figure \ref{fig} are pullbacks, it follows that the diagram on the right is also a pullback.
\begin{figure}[htbp]
    \centering
\[
\xymatrix{
A \ar[r]^{1_A}\ar[d] & A\ar[d]&\\
0 \ar[r] & 0
}
\xymatrix{
B\ar[d]_{1_B}\ar[r] & 0\ar[d]&\\
B \ar[r] & 0
}
\xymatrix{
A+B \ar[r]^-{[1_A,0]}\ar[d]_{[0,1_B]} & A\ar[d]\\
B \ar[r] & 0
}
\]
    \caption{Construction of biproducts}
    \label{fig}
\end{figure}
It therefore follows that $\C$ admits biproducts. To prove that $\C$ is additive,
it is sufficient to prove that each morphism 
$m=\langle [1,1],\pi_2\rangle : A\oplus A \to A\oplus A$
is an isomorphism. We begin by showing that $m$ is a monomorphism. According to
Condition \ref{conditions} (c), the rectangle in the diagram
\[
\xymatrix@R=3ex@C=2ex{
A\oplus A \ar[rr]^{[1,1]}\ar@{-->}^{u}[dr]\ar[dd]_{\langle 1,1\rangle\oplus 1} &&A\ar[dd]^{\langle 1,1\rangle}\\
& R \ar[ur]^{[1,1]r_1}\ar[dd]_(0.3){\langle r_1,r_2\rangle} & \\
(A\oplus A)\oplus A \ar[rr]|(0.54){\hole}^(0.7){[1,\langle 1,1\rangle]}\ar[dr]_(0.4){\langle \pi_1\oplus 1,\pi_2\oplus 1\rangle\phantom{as}}&& A\oplus A\\
& (A\oplus A)\oplus (A\oplus A) \ar[ur]_(0.6){[1,1]\oplus [1,1]}
}
\]
a pullback. Writing $(R,r_1,r_2)$ for the kernel pair of $[1,1]:A\oplus A \to A$,
it follows that right hand parallelogram in the diagram above is a pullback, and hence
there is a unique morphism $u$ making the left hand parallelogram into a pullback.
Since $((A\oplus A)\oplus A,\pi_1\oplus 1,\pi_2\oplus 1)$ is the kernel pair of $\pi_2: A\oplus A\to A$,
and the left hand parallelogram is a pullback, it follows that the kernel pair 
of $m : A\oplus A \to A\oplus A$ is 
$(A\oplus A,1_{A+A},1_{A+A})$.  Applying Condition \ref{conditions} (c) again, we know that
the rectangle in the diagram
\[
\xymatrix{
A\oplus A \ar@{-->}[dr]^{v} \ar@/^3ex/[drr]^{1_{A\oplus A}}\ar@/_3ex/[ddr]_{\langle 1, \tiny{\begin{bmatrix} 0 & 1 \\ 0 & 0\end{bmatrix}}\rangle} \\
& (A\oplus A) \oplus (A\oplus A) \ar[r]^-{[1,1]}\ar[d]_{m\oplus 1} & A\oplus A\ar[d]^{m}\\
& (A\oplus A) \oplus (A\oplus A) \ar[r]_-{[1,m]} & A\oplus A
}
\]
is a pullback. Therefore, since the outer arrows commutate, there exists 
a unique morphism $v$ making the diagram commute.
It follows  $1_{A\oplus A} = \pi_1 (m\oplus 1) v= m\pi_1v$ and hence $m$ is
an isomorphism as desired.
\end{proof}
\begin{remark}
Note that in ``if'' part of the above proof we didn't use Condition \ref{conditions} (d),
so that in the pointed context it is redundant.
\end{remark}
\section{Condition \ref{conditions} in the exact context}
In this section, we prove that coexactness and coprotomodularity are consequences
of Condition \ref{conditions} together with exactness. As a corollary, we
show that the opposite category of a cocomplete pretopos, is arithmetical and ideally exact.
We start with a few facts about feeble pullbacks.
As mentioned above in a special case, by a feeble pullback in category with finite limits, we mean a commutative square
\begin{equation}
\label{feeble_pullback_1}
\tag{D3}
\vcenter{
\xymatrix{
A\ar[d]_{\alpha} \ar[r]^{f} & B \ar[d]^{\beta}\\
A'\ar[r]_{f'} & B' 
}
}
\end{equation}
such that the induced morphism $A\to A'\times_{\langle f',\beta \rangle} B$
into the pullback is a regular epimorphism.
\begin{lemma}\label{feeble_is_pullback}
If \eqref{feeble_pullback_1}
is a feeble pullback in a finitely complete category and $\alpha$ and $f$ are jointly monomorphic, then
it is a pullback.
\end{lemma}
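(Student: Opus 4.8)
The plan is to show that the morphism $e\colon A\to P$ induced into the pullback $P=A'\times_{\langle f',\beta\rangle}B$ is not merely a regular epimorphism, which is all that feebleness provides, but in fact an isomorphism; since $e$ being an isomorphism is precisely the assertion that \eqref{star} is a pullback, this will finish the proof. First I would name the canonical projections $\pi_1\colon P\to A'$ and $\pi_2\colon P\to B$, so that by the defining property of the induced morphism one has $\pi_1 e=\alpha$ and $\pi_2 e=f$, while the feeble pullback hypothesis records that $e$ is a regular epimorphism.

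The key step is to observe that $e$ is also a monomorphism, and here is where the joint monomorphy of $\alpha$ and $f$ enters. Given a parallel pair $g,h\colon W\to A$ with $eg=eh$, composing with the two projections yields $\alpha g=\pi_1 e g=\pi_1 e h=\alpha h$ and $f g=\pi_2 e g=\pi_2 e h=f h$; since $\alpha$ and $f$ are jointly monomorphic this forces $g=h$, so $e$ is a monomorphism. This translation is immediate once the projections have been named, so no real work is needed at this stage.

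To conclude I would invoke the elementary fact that a morphism which is simultaneously a regular epimorphism and a monomorphism is an isomorphism: exhibiting $e$ as the coequalizer of some pair $u,v$, the relation $eu=ev$ combined with the monomorphy of $e$ gives $u=v$, whence $e$ is the coequalizer of a pair of equal morphisms and is therefore an isomorphism. Thus $e$ is an isomorphism and \eqref{star} is a pullback.

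The main obstacle, such as it is, is confined to recording this last elementary fact about regular epimorphisms that are monomorphisms; the rest of the argument is a direct unwinding of the definitions of feeble pullback and of joint monomorphy. Note that finite completeness is used only to guarantee that the pullback $P$ against which feebleness is measured actually exists.
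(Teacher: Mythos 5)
Your proof is correct and follows exactly the paper's argument: joint monomorphy of $\alpha$ and $f$ makes the induced morphism into the pullback a monomorphism, feebleness makes it a regular epimorphism, and a monic regular epimorphism is an isomorphism. You merely spell out the two elementary verifications that the paper leaves implicit.
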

\begin{proof}
Just note that $\alpha$ and $f$ being jointly monomorphic force the induced morphism into the
pullback to be a monomorphism, while being a feeble pullback forces it to be a regular epimorphism.
\end{proof}
\begin{lemma}\label{feeble_composite}
For a diagram
\[
\xymatrix{
A\ar[d]_{\alpha} \ar[r]^{f}\ar@{}[dr]|{\tiny\fbox{1}} & B \ar[d]^{\beta} \ar[r]^{g}\ar@{}[dr]|{\tiny\fbox{2}} & C \ar[d]^{\gamma}\\
A'\ar[r]_{f'} & B' \ar[r]_{g'} & C' 
}
\]
in a regular category $\C$, we have:
\begin{enumerate}[(a)]
\item if $\fbox{1}$ and $\fbox{2}$ are feeble pullbacks, then so is $\fbox{1}\fbox{2}$; 
\item if $\fbox{2}$ is a pullback, then $\fbox{1}$ is a feeble pullback if and only if $\fbox{1}\fbox{2}$ is a feeble pullback; 
\item if $\fbox{1}\fbox{2}$ is a feeble pullback and $f'$ is a 
regular epimorphism, then so is $\fbox{2}$; 
\end{enumerate}
\end{lemma}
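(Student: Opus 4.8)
The plan is to reduce all three parts to a single comparison square that relates the pullback of the outer rectangle to the pullback of \fbox{2}. Write $P_1$, $P_2$ and $\bar{P}$ for the pullbacks of the pairs $(f',\beta)$, $(g',\gamma)$ and $(g'f',\gamma)$, and let $e_1\colon A\to P_1$, $e_2\colon B\to P_2$, $e\colon A\to\bar{P}$ be the induced comparison morphisms; being a feeble pullback means precisely that the relevant comparison morphism is a regular epimorphism. The first step is to present $\bar{P}$ as an iterated pullback. Since $\bar{P}=A'\times_{C'}C\cong A'\times_{B'}(B'\times_{C'}C)=A'\times_{B'}P_2$, the object $\bar{P}$ is the pullback of $f'$ along the projection $r\colon P_2\to B'$; denote the two projections of this pullback by $\pi\colon\bar{P}\to P_2$ and $\bar{P}\to A'$. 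In particular $\pi$ is a pullback of $f'$.

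The second, and technically central, step is to verify that the square with top edge the projection $P_1\to B$, bottom edge $\pi\colon\bar{P}\to P_2$, left edge the induced morphism $\theta\colon P_1\to\bar{P}$, and right edge $e_2\colon B\to P_2$ is a pullback. This is a direct check with the universal properties: testing against an arbitrary object, a cone over $\pi$ and $e_2$ is a compatible pair $(a',b)$ with $f'a'=\beta b$, which is exactly the defining data of $P_1$, and under this identification the two projections become the projection $P_1\to B$ and $\theta$. Thus $\theta$ is a pullback of $e_2$. I would also record the identity $e=\theta\,e_1$ (both sides are the morphism $\langle\alpha,e_2 f\rangle$ into $\bar{P}=A'\times_{B'}P_2$) and the commutativity $e_2\,(P_1\to B)=\pi\,\theta$.

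With this scaffolding in place the three statements follow from three standard facts about regular categories, namely that regular epimorphisms are pullback stable, closed under composition, and left-cancellable in the sense that $g\,h$ a regular epimorphism forces $g$ to be one (here using that in a regular category regular epimorphisms coincide with strong epimorphisms). For (a), $\theta$ is a pullback of the regular epimorphism $e_2$, hence a regular epimorphism, and $e=\theta\,e_1$ is then a composite of regular epimorphisms. For (c), from $e=\theta\,e_1$ being a regular epimorphism left-cancellation gives that $\theta$ is a regular epimorphism, while $\pi$ is a regular epimorphism because $f'$ is and $\pi$ is a pullback of $f'$; hence $e_2\,(P_1\to B)=\pi\,\theta$ is a regular epimorphism, and left-cancellation gives that $e_2$ is a regular epimorphism. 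For (b), when \fbox{2} is a genuine pullback $e_2$ is an isomorphism, so its pullback $\theta$ is an isomorphism, and $e=\theta\,e_1$ then shows that $e$ is a regular epimorphism if and only if $e_1$ is.

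The part I expect to require the most care is the pullback verification in the second step: getting the comparison square right and confirming that $\bar{P}\times_{P_2}B\cong P_1$ compatibly with the projections is exactly the classical pasting-lemma bookkeeping, carried out one level up so that the feeble (regular-epimorphism) information can be transported along $\theta$ and $\pi$. Once that square is established, parts (a)--(c) are immediate from the elementary closure properties of regular epimorphisms recalled above.
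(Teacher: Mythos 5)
Your proof is correct and is essentially the paper's own argument: your $P_2$, $\bar P\cong A'\times_{B'}P_2$, and the pullback square exhibiting $P_1\cong\bar P\times_{P_2}B$ are precisely the paper's auxiliary pullbacks \fbox{3}, \fbox{4} and \fbox{5} (with $e_2=j$, $\pi=q_2$, $\theta=p_1$, $e=p_1i$), and both proofs then deduce (a)--(c) from pullback stability, closure under composition, and left-cancellation of regular (strong) epimorphisms in a regular category. The only difference is bookkeeping: the paper defines $P$ as the pullback $Q\times_R B$ and takes the comparison $i\colon A\to P$ as induced, whereas you start from $P_1$ and verify that it has this universal property.
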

\begin{proof}
Consider the diagram
\[
\xymatrix@C=2ex@R=2ex{
	A\ar[ddddd]_{\alpha} \ar[rrrrr]^{f}\ar[dr]^{i}&&&&& B\ar[ddrr]^{j} \ar[ddddd]^{\beta} \ar[rrrrr]^{g} &&&&& C \ar[ddddd]^{\gamma}\\
	&P \ar[rrrru]^{p_2}\ar[dr]_{p_1}\ar@{}[drrrr]|{\tiny\fbox{5}}&&&& &&&&&\\
	&&Q \ar[rrrrr]^{q_2}\ar[dddll]^{q_1}\ar@{}[dddrrr]|{\tiny\fbox{4}}&&& &&R \ar[uurrr]^{r_2}\ar[dddll]^{r_1}\ar@{}[dddrrr]|{\tiny\fbox{3}}&&& \\
	&&&&& &&&&& \\
	&&&&& &&&&& \\
	A'\ar[rrrrr]_{f'} &&&&& B' \ar[rrrrr]_{g'} &&&&& C' 
} 
\]
where $\fbox{3}$, $\fbox{4}$ and $\fbox{5}$ are pullbacks, and $i$ and $j$ the
unique morphisms making the diagram commute. 
To prove (a), suppose $\fbox{1}$ and $\fbox{2}$ are feeble pullbacks.
It follows that $i$ and $j$ are regular
epimorphisms, and hence so is $p_1$. Since regular epimorphisms
are closed under composition in a regular category,
it follows that $p_1i$ is also a regular epimorphism. To prove (b), suppose
that $j$ is an isomorphism and note that since $p_1$ is then also an
isomorphism, $p_1i$ is a regular epimorphism if and only if $i$ is. 
On the other hand to prove (c), suppose that
$\fbox{1}\fbox{2}$ is a feeble pullback and $f'$ is a 
 regular epimorphism. It follows that that $q_2$ and $p_1i$ 
are regular epimorphisms. This means that $q_2p_1i=jp_2i$ is a
 regular
epimorphism and hence $j$ is a regular epimorphism as desired.
\end{proof}
Since the coproduct of regular epimorphisms is a regular epimorphism we have:
\begin{lemma}\label{feeble_stable}
Let $\C$ be a finitely complete category with binary coproducts. 
If pullbacks are stable under binary coproduct in $\C$, then so are feeble pullbacks.\qed
\end{lemma}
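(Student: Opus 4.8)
The plan is to assemble the result from two ingredients: the standing hypothesis that the coproduct functor preserves pullbacks, and the general fact that it preserves coequalizers (being left adjoint to the diagonal $\C\to\C\times\C$), so that in particular it preserves regular epimorphisms. No regularity of $\C$ is needed.

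First I would fix two feeble pullback squares, with vertical morphisms $\alpha_k,\beta_k$ and horizontal morphisms $f_k,f_k'$ for $k\in\{1,2\}$, and form the genuine pullbacks $(P_k,\pi_k,\pi_k')$ of $f_k'$ and $\beta_k$, together with the comparison morphisms $i_k:A_k\to P_k$ satisfying $\pi_k i_k=\alpha_k$ and $\pi_k' i_k=f_k$; by definition of feeble pullback each $i_k$ is a regular epimorphism. Since the two squares exhibiting $P_1$ and $P_2$ as pullbacks are pullbacks and, by assumption, pullbacks are stable under binary coproduct, their coproduct square---with vertex $P_1+P_2$ and the coproduct projections $\pi_1+\pi_2$ and $\pi_1'+\pi_2'$---is again a pullback. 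Hence $P_1+P_2$ is the pullback of $f_1'+f_2'$ and $\beta_1+\beta_2$, the very pullback through which the coproduct of the two feeble squares must factor.

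Next I would identify the comparison morphism $A_1+A_2\to P_1+P_2$ induced by the coproduct square with $i_1+i_2$: composing $i_1+i_2$ with the coproduct projections returns $\alpha_1+\alpha_2$ and $f_1+f_2$, so by the uniqueness clauses in the coproduct and pullback universal properties the two morphisms coincide. It then remains only to check that $i_1+i_2$ is a regular epimorphism. Writing each $i_k$ as the coequalizer of a pair $u_k,v_k:K_k\to A_k$, the pair $(i_1,i_2)$ is a coequalizer in $\C\times\C$, and applying the coproduct functor yields that $i_1+i_2$ is the coequalizer of $u_1+u_2$ and $v_1+v_2$; thus it is a regular epimorphism and the coproduct square is a feeble pullback.

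The only place demanding any care is the bookkeeping in the middle step---confirming that the comparison morphism really is $i_1+i_2$---together with the observation that the preservation of coequalizers by $-+-$ is available unconditionally, since it follows purely from $-+-$ being a left adjoint; everything else is a direct application of the stated hypothesis.
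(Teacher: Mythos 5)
Your proof is correct. Note that the paper actually states this lemma without any proof at all, so there is nothing to compare against except the argument the author presumably had in mind---and yours is exactly that argument: identify the comparison morphism of the coproduct square with $i_1+i_2$ via the universal properties, then observe that $i_1+i_2$ is a regular epimorphism because the coproduct functor $\C\times\C\to\C$, being left adjoint to the diagonal, preserves coequalizers. The only pedantic remark worth making is that the adjoint-functor justification presupposes that \emph{all} binary coproducts exist (so that the coproduct functor is defined); this is implicit in the lemma's hypothesis as used in the paper (Condition~\ref{conditions}(a)), and in any case the same preservation fact can be verified directly by hand for the particular coproducts at issue, so your proof stands without qualification.
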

For a binary relation $(R,r_1,r_2)$ on object $A$ of a category $\C$, we write
$(\bar R,\bar r_1,\bar r_2)$ for its reflexive closure. We have:
\begin{proposition}\label{proposition:7}
	Suppose that $\C$ is a regular category satisfying Condition \ref{conditions}.
       Let $m:S\to A$ be a monomorphism, and $(R,r_1,r_2)$ an equivalence
        relation on $S$ in $\C$. The reflexive closure of $(R,mr_1,mr_2)$ is
       an equivalence relation.
\end{proposition}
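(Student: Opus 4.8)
The plan is to work inside the bicategory of relations of $\C$ (legitimate since $\C$ is regular) and to treat the reflexive closure as a join of subobjects of $A\times A$. First I would record that, since $m$ and $\langle r_1,r_2\rangle$ are monomorphisms, so is $\langle mr_1,mr_2\rangle\colon R\to A\times A$; thus $R_m:=(R,mr_1,mr_2)$ is a genuine relation on $A$. I would then observe that $R_m$ is already symmetric and transitive. Symmetry is immediate from the symmetry isomorphism $\sigma\colon R\to R$ of the equivalence relation $R$, since $\langle mr_2,mr_1\rangle=\langle mr_1,mr_2\rangle\sigma$. Transitivity is the one place where $m$ being monic matters: the pullback of $mr_2$ along $mr_1$ coincides with the pullback $R\times_S R$ of $r_2$ along $r_1$ (a test map equalises $mr_2,mr_1$ iff it equalises $r_2,r_1$), and post-composing the transitivity morphism $R\times_S R\to R$ of $R$ with $\langle mr_1,mr_2\rangle$ exhibits $R_m\circ R_m\le R_m$. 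Using Condition \ref{conditions}(a) together with the image factorisation of a regular category, I would construct the reflexive closure as the join
\[
\bar R \;=\; R_m\vee\Delta_A\;=\;\operatorname{im}\bigl(\langle[mr_1,1_A],[mr_2,1_A]\rangle\colon R+A\longrightarrow A\times A\bigr),
\]
noting that this join is the categorical one (the subobject sum) and need not be a set-theoretic union---this is precisely the point at which the additive and the lextensive cases look different.

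Next I would dispatch the two easy axioms. Reflexivity holds by construction, since $\Delta_A\le\bar R$. Symmetry follows because the transpose operation on relations preserves joins and fixes $\Delta_A$, while $R_m^{\circ}=R_m$; hence $\bar R^{\circ}=R_m^{\circ}\vee\Delta_A=\bar R$. Transitivity is then reduced to the inclusion $\bar R\circ\bar R\le\bar R$, and, assuming relational composition distributes over the join $R_m\vee\Delta_A$, this expands as $(R_m\circ R_m)\vee(R_m\circ\Delta_A)\vee(\Delta_A\circ R_m)\vee(\Delta_A\circ\Delta_A)=(R_m\circ R_m)\vee R_m\vee R_m\vee\Delta_A$, which is $\le R_m\vee\Delta_A=\bar R$ by the transitivity of $R_m$ established above and the fact that $\Delta_A$ is the identity for composition.

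The hard part will be the distributivity of composition over the join $R_m\vee\Delta_A$, which fails in a general regular category and is exactly what Condition \ref{conditions} is there to supply uniformly. Concretely, I would present $\bar R$ via the regular epimorphism $e\colon R+A\to\bar R$ coming from the image factorisation, compute $\bar R\circ\bar R$ as the image of the pullback of $[mr_2,1_A]$ along $[mr_1,1_A]$ (the induced map $(R+A)\times_A(R+A)\to\bar R\times_A\bar R$ being a regular epimorphism, so the two images agree), and then use Condition \ref{conditions}(b) to recognise the relevant pullbacks as coproducts of pullback squares, transporting the result along $e$ by means of Lemma \ref{feeble_stable} and Lemma \ref{feeble_composite} (with feebleness fed in through Condition \ref{conditions}(c)--(d)). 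The delicate point is that in the additive case the join is a sum rather than a union, so I must argue with the coproduct presentation and the feeble-pullback calculus rather than with any set-level decomposition of $\bar R\times_A\bar R$ into four disjoint corners; making that calculus yield the containment $\bar R\circ\bar R\le\bar R$ in a manner valid for both additive and lextensive inputs is the crux of the proof.
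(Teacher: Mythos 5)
Your setup coincides with the paper's own: you construct $\bar R$ as the image of $[\langle mr_1,mr_2\rangle,\langle 1_A,1_A\rangle]\colon R+A\to A\times A$, dispose of reflexivity and symmetry, observe that $(R,mr_1,mr_2)$ is transitive because $m$ is monic, and reduce everything to the containment $\bar R\circ\bar R\le\bar R$, i.e.\ to one instance of distributivity of relational composition over this particular join. Up to that point the proposal is correct, and your observation that the induced map $(R+A)\times_A(R+A)\to\bar R\times_{\langle\bar r_2,\bar r_1\rangle}\bar R$ is a regular epimorphism (by pullback stability of regular epimorphisms) is also right, so computing $\bar R\circ\bar R$ from the presentation $e\colon R+A\to\bar R$ is legitimate.

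But the argument stops exactly where the proposition's content begins. You assert that Condition \ref{conditions}(b)--(d) together with Lemmas \ref{feeble_stable} and \ref{feeble_composite} will ``yield the containment,'' and you yourself label this step the crux, but you never produce it. What is actually needed, and what the paper's proof supplies, is an explicit covering of the composability object: one exhibits $(R\times_{\langle r_2,r_1\rangle}R+R)+(R+A)$ with its two maps $[\pi_2+mr_2,1_{R+A}]$ and $[\pi_1,1_R]+[mr_1,1_A]$ to $R+A$, and shows that the resulting square over $[mr_2,1_A]$ and $[mr_1,1_A]$ is a feeble pullback. This is done by assembling a grid of squares in which Condition (b) handles coproducts of pullbacks, Condition (c) handles the square built from $m$, and Condition (d) is applied to the split epimorphisms $r_2$ (split because $R$ is reflexive---a hypothesis your sketch never invokes at this point) and $[mr_1,1_A]$ (split by $\iota_2$); these are then composed via Lemma \ref{feeble_composite} and combined with an identity square via Lemma \ref{feeble_stable}. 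Only after this does one know that the comparison $\tilde e\colon(R\times_{\langle r_2,r_1\rangle}R+R)+(R+A)\to\bar R\times_{\langle\bar r_2,\bar r_1\rangle}\bar R$ is a regular epimorphism, whereupon the transitivity morphism $\tau$ of $R$ and the monomorphism $\langle\bar r_1,\bar r_2\rangle$ give the diagonal fill-in $\tilde\tau$ witnessing transitivity of $\bar R$. Your final paragraph asserts that such a computation exists rather than giving it; since everything preceding it is routine, this is a genuine gap rather than a compressed proof, even though the scaffolding you erect around it is the same as the paper's.
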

\begin{proof}
Note that the reflexive closure $(\bar R,\bar r_1,\bar r_2)$ of $R$ can be constructed as the join of 
$\langle mr_1,mr_2\rangle : R\to A\times A$ and $\langle 1_A,1_A\rangle: A\to A\times A$.
As recalled above, the morphism $\langle \bar r_1,\bar r_2 \rangle$, therefore forms part of the
(regular epi,mono)-factorization of $[\langle mr_1,mr_2\rangle,\langle 1_A,1_A\rangle]$
displayed in the diagram
\[
\xymatrix{
R+A \ar[dr]_{[\langle mr_1,mr_2\rangle,\langle 1_A,1_A\rangle]\ \ }\ar[rr]^{e} && \bar R \ar[dl]^{\langle \bar r_1, \bar r_2\rangle}\\
& A\times A.
}
\]
The relation $(\bar R, \bar r_1, \bar r_2)$ is easily shown to be symmetric,
and hence we only need to show that it is transitive.
Noting that $[\langle mr_1,mr_2\rangle,\langle 1_A,1_A\rangle] = \langle [mr_1,1_A],[mr_2,1_A] \rangle$
we see that $\bar r_i e = [mr_i,1_A]$. Forming the pullback
\[
\xymatrix{
R\times_{\langle r_2,r_1\rangle}R\ar[d]_{\pi_1}\ar[r]^-{\pi_2} & R\ar[d]^{r_1}\\
R \ar[r]_{r_2} & S
}
\]
it follows from Condition \ref{conditions} that each of the squares in the diagram
\[
\xymatrix@C=8ex{
R\times_{\langle r_2,r_1\rangle}R+R\ar[r]^-{\pi_2+r_2}\ar[d]_{\pi_1+1_R}&R +S \ar[r]^{1_R+m} \ar[d]_{r_1+1_S} & R+A\ar[d]^{r_1+1_A}\\
R+R \ar[r]^{r_2+r_2}\ar[d]_{[1_R,1_R]}&S+S \ar[r]^{1_S+m} \ar[d]_{[1_S,1_S]} & S+A\ar[d]^{[m,1_A]}\\
R\ar[r]_{r_2} &S \ar[r]^{m} & A
}
\]
is a feeble pullbacks. It then follows by Lemma \ref{feeble_composite} that the outer morphisms form a feeble pullback. Therefore, by Lemma \ref{feeble_stable}, the left hand
square in the diagram
	\[
		\xymatrix@C=12ex{
			(R\times_{\langle r_2,r_1\rangle}R+R)+(R+A) \ar[r]^-{(\pi_2+mr_2)+1_{R+A}} \ar[d]_{[\pi_1,1_R]+[mr_1,1_A]}&(R+A)+(R+A)\ar[r]^-{[1_{R+A},1_{R+A}]} \ar[d]_{[mr_1,1_A]+[mr_1,1_A]} & R+A\ar[d]^{[mr_1,1_A]}\\
			R+A\ar[r]_{mr_2+1_A}&A+A\ar[r]_{[1_A,1_A]} & A
		}
	\]
is a feeble pullback, and hence by Lemma \ref{feeble_composite}, so is the composite.
	This means that the diagram  
	\[
		\xymatrix@C=14ex{
			(R\times_{\langle r_2,r_1\rangle}R +R)+(R+A) \ar[r]^-{[\pi_2+mr_2,1_{R+A}]} \ar[d]_{[\pi_1,1_R]+[mr_1,1_A]} & R+A\ar[d]^{[mr_1,1_A]}\\
			R+A\ar[r]_{[mr_2,1_A]} & A
		}
	\]
	is a feeble pullback. Forming the pullback
\[
\xymatrix{
\overline R\times_{\langle\overline r_2,\overline r_1\rangle}\overline R\ar[d]_{\overline\pi_1}\ar[r]^-{\overline\pi_2} & \overline R\ar[d]^{\overline r_1}\\
\overline R \ar[r]_{\overline r_2} & A
}
\]
let us write $\tilde e$ for the unique morphism $ (R\times_{\langle r_2,r_1\rangle}R +R)+(R+A) \to \bar R\times_{\langle \bar r_2,\bar r_1\rangle} \bar R$ such that $\bar \pi_1 \tilde e  = e ([\pi_1,1_R]+[mr_1,1_A])$ and  $\bar \pi_2 \tilde e = e [\pi_2+mr_2,1_{R+A}]$. Let us write $\tau:R\times_{\langle r_2,r_1\rangle} R \to R$ for the unique morphism such that $r_1 \tau = r_1\pi_1$ and $r_2\tau = r_2\pi_2$ (which exists since $R$ is transitive). 
	Easy calculations shows that the diagram
	\[
		\xymatrix@C=15ex@R=8ex{
			& A\\
			(R\times_{\langle r_2,r_1\rangle}R +R)+(R+A)
			\ar[rr]^-{[\iota_1[\tau,1_R],1_{R+A}]} 
			\ar[dr]|{[[mr_1\pi_1,mr_1],[mr_1,1_A]]}
			\ar[ur]|{[[mr_2\pi_2,mr_2],[mr_2,1_A]]}
			&& R+A \ar[dl]^{[mr_1,1_A]}\ar[ul]_{[mr_2,1_A]}\\
			& A
		}
	\]
	commutes, $\overline r_1 \overline \pi_1 \tilde e =[[mr_1\pi_1,mr_1],[mr_1,1_A]]$ and 
 $\overline r_2 \overline \pi_2 \tilde e=[[mr_2\pi_2,mr_2],[mr_2,1_A]]$.
It now follows that the diagram 
	\[
		\xymatrix{
			(R\times_{\langle r_2,r_1\rangle}R +R)+(R+A) \ar[r]^-{\tilde e} \ar[d]|{e[\iota_1[\tau,1_R],1_{R+A}]} &  \bar R\times_{\langle \bar r_2,\bar r_1\rangle} \bar R \ar[d]|{ \langle \bar r_1\pi_2,\bar r_2\pi_2\rangle} \ar@{-->}[dl]_{\tilde \tau}\\
	\bar R \ar[r]_{\langle \bar r_1,\bar r_2\rangle} & A\times A
}
\]
	(ignoring the dotted arrow) commutes, and hence since $\tilde e$ is a regular epimorphisms, and $\langle \bar r_1, \bar r_2 \rangle$ is 
	a monomorphism, that there exits a morphism $\tilde \tau$, as shown with a dotted arrow above, making the diagram commute.
\end{proof}
\begin{proposition}\label{prop:stab_reg}
Let $\C$ be an exact category satisfying Condition \ref{conditions}.
The pushout of a monomorphism along a regular epimorphism exists,
and produces a pullback in $\C$.
Monomorphisms are stable under
pushout along regular epimorphisms in $\C$.
\end{proposition}
\begin{proof}
Let $e:S\to T$ be a regular epimorphism and let $m:S\to A$ be a monomorphism.
Writing $(R,r_1,r_2)$ for the kernel pair of $e$, Proposition \ref{proposition:7} implies 
that the reflexive closure $(\bar R,\bar r_1,\bar r_2)$ of $(R,mr_1,mr_2)$ is an equivalence relation.
Letting $p:A\to B$ be the coequalizer of $\bar r_1, \bar r_2 \bar R\to A$, one easily show that
that $p$ is also the coequalizer of $mr_1,mr_2:R\to A$, and hence there exists
$n:T\to B$ such that the diagram
\[
\xymatrix{
S\ar[r]^{e}\ar[d]_{m} & T\ar[d]^{n}\\
A\ar[r]_{p} & B
}
\]
is a pushout. Next consider the reasonably commutative diagram
\begin{equation}
\label{star}
\tag{D4}
\vcenter{
\xymatrix{
R+S \ar[d]_{1_A+m} \ar@<0.5ex>@/^6ex/[rr]^{[r_1,1_S]}\ar@<-0.5ex>@/^6ex/[rr]_{[r_2,1_S]} \ar[r]^{u} & R\ar[d]^{\tilde m} \ar@<0.5ex>[r]^{r_1} \ar@<-0.5ex>[r]_{r_2} & S\ar[r]^{e}\ar[d]_{m} & T\ar[d]^{n}\\
R+A \ar@<0.5ex>@/_6ex/[rr]^{[mr_1,1_A]}\ar@<-0.5ex>@/_6ex/[rr]_{[mr_2,1_A]} \ar[r]^{v}&  \bar R \ar@<0.5ex>[r]^{\bar r_1} \ar@<-0.5ex>[r]_{\bar r_2}& A\ar[r]_{p} & B
}
}
\end{equation}
where $v$ is the regular epimorphism used in the construction of the reflexive closure.
Since for $i=1$ and $i=2$ the two squares of the left hand diagram
\[
\xymatrix@C=8ex{
R+S \ar[d]_{1_A+m} \ar@/^6ex/[rr]^{[r_i,1_S]} \ar[r]^{r_i+1_S} & S+S\ar[d]^{1+m} \ar[r]^{[1_S,1_S]}  & S\ar[d]_{m} & 
R+S \ar[d]_{1_A+m} \ar@/^6ex/[rr]^{[r_i,1_S]} \ar[r]^{u} & R\ar[d]^{\tilde m} \ar[r]^{r_i}  & S\ar[d]_{m}\\
R+A \ar@/_6ex/[rr]^{[mr_i,1_A]} \ar[r]^{r_i+1_A}&  S+A \ar[r]^{[m,1_S]}& A &
R+A \ar@/_6ex/[rr]^{[mr_i,1_A]} \ar[r]^{v}&  \bar R \ar[r]^{\bar r_i}& A
}
\]
are feeble pullbacks, it follows from Lemma \ref{feeble_composite} (a) that the outer arrows on the left, and hence on the right
(because they are the same) 
form a feeble pullbacks. Therefore, since $v$ is regular epimorphism it follows from Lemma \ref{feeble_composite} (c) that the 
right hand commutative square is a feeble pullback.
However, since $\tilde m$ is a monomorphism, this means it is a pullback by Lemma \ref{feeble_is_pullback}.
From this we conclude that the right hand square of \eqref{star} is a pullback.
\end{proof}
\begin{proposition}\label{prop:stab_cod_incl}
Let $\C$ be a category satisfying Condition \ref{conditions}.
The pushout of a monomorphism along a coproduct inclusion exists,
and produces a pullback in $\C$.
Monomorphisms are stable under
pushout along coproduct inclusions in $\C$.
\end{proposition}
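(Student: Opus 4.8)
The plan is to treat the two geometric assertions separately, deducing both from Condition~\ref{conditions}(b). Fix a monomorphism $m\colon S\to A$ and a coproduct inclusion $\iota_1\colon S\to S+C$ (the case of the second inclusion being symmetric). Since binary coproducts exist by Condition~\ref{conditions}(a), I would first record, by a routine check of the universal property, that the square
\[
\xymatrix{ S \ar[r]^{\iota_1} \ar[d]_{m} & S+C \ar[d]^{m+1_C}\\ A \ar[r]_{\iota_1} & A+C }
\]
is a pushout: for a competing cocone $(f\colon A\to X,\ g=[g_1,g_2]\colon S+C\to X)$ the compatibility $g\iota_1=fm$ reads $g_1=fm$, and the unique filler is $[f,g_2]\colon A+C\to X$. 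This settles existence and reduces the proposition to showing that this square is a pullback and that $m+1_C$ is a monomorphism.

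To see that the pushout is a pullback I would exhibit it as a binary coproduct of two pullback squares and invoke Condition~\ref{conditions}(b). Up to the canonical isomorphisms $S+0\cong S$ and $A+0\cong A$, the square above is the coproduct of the square whose vertical legs are both $m$ and whose horizontal legs are $1_S$ and $1_A$ (a pullback, since its bottom leg is invertible) with the square built on the initial object whose horizontal legs are the unique morphism $0\to C$, whose left leg is $1_0$, and whose right leg is $1_C$ (a pullback, since $1_C$ is invertible). Taking their coproduct reproduces precisely the four morphisms $\iota_1$, $m+1_C$, $\iota_1$, $m$, so Condition~\ref{conditions}(b) yields that the pushout square is a pullback. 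The only external ingredient is the initial object, which is present in every setting to which the proposition is applied (additive categories have a zero object and extensive categories a strict initial object).

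For stability I would run the same coproduct trick on kernel pairs. As $m$ is monic its kernel pair is $(S,1_S,1_S)$, and the kernel pair of $1_C$ is $(C,1_C,1_C)$; both are pullbacks that exist with no completeness hypothesis. By Condition~\ref{conditions}(b) their coproduct is again a pullback, and it is exactly the square presenting the kernel pair of $m+1_C$ with both projections equal to $1_{S+C}$; hence $m+1_C$—the pushout of $m$—is a monomorphism. I expect the pullback assertion to be the only genuine obstacle: the difficulty is that the summand $C$ is foreign to $m$, with no morphisms tying it to $S$ or $A$, so one cannot hope to factor a competing cone by hand, and the decomposition above is precisely what converts this foreignness into a single application of Condition~\ref{conditions}(b). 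I would also note that, in contrast with Proposition~\ref{prop:stab_reg}, neither Condition~\ref{conditions}(c) nor any feeble-pullback machinery is needed here.
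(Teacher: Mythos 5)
Your proof is correct and is essentially the paper's own argument: the paper likewise exhibits the pushout square as a binary coproduct of two pullback squares --- one of them built on the initial object --- and invokes Condition~\ref{conditions}(b), and it proves monicity of the pushed-out map by observing that its kernel pair is the coproduct of the kernel pairs of $m$ and of an identity, again via Condition~\ref{conditions}(b). The only differences are cosmetic (your decomposition is the transpose of the paper's, and you spell out the pushout property the paper merely asserts); your explicit remark that an initial object is needed is accurate, and the paper's own proof uses $0$ in exactly the same way without flagging it.
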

\begin{proof}
Let $\iota_2: S \to A+S$ be a coproduct inclusion and $n:S\to B$ 
a monomorphism. Noting
the diagrams
\[
\xymatrix{
0 \ar[r]\ar[d] & 0 \ar[d] & S \ar[r]^{n}\ar[d]_{1_S} & B\ar[d]^{1_B}\\
A\ar[r]_{1_A} & A & S \ar[r]_{n} & B
}
\]
are pullbacks it follows that their binary coproduct 
\[
\xymatrix{
S \ar[r]^{n} \ar[d]_{\iota_2} & B\ar[d]^{\iota_2}\\
A+S \ar[r]_{1+n} & A+B
}
\]
is a pullback. It is also a pushout. Note that $1+n$ is a monomorphism since 
each of its kernel pair projections will be the coproduct of the respective
kernel pair projections of $1$ and $n$.
\end{proof}
\begin{proposition}
Let $\C$ be an exact category satisfying Condition \ref{conditions}.
The pushout of a monomorphism along an arbitrary morphism exists,
and produces a pullback in $\C$. Monomorphisms are pushout stable in $\C$.
\end{proposition}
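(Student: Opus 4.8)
The plan is to reduce the general statement to the two special cases already established in Propositions \ref{prop:stab_cod_incl} and \ref{prop:stab_reg}, by factoring an arbitrary morphism as a coproduct inclusion followed by a regular epimorphism. Concretely, given a monomorphism $m\colon S\to A$ and an arbitrary morphism $f\colon S\to T$, I would use that binary coproducts exist (Condition \ref{conditions}(a)) to write
\[
f \;=\; [f,1_T]\circ\iota_1\colon\quad S \xrightarrow{\ \iota_1\ } S+T \xrightarrow{\ [f,1_T]\ } T,
\]
where $\iota_1$ is a coproduct inclusion and $[f,1_T]$ is split by $\iota_2$, hence is a (split, and therefore regular) epimorphism.

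Next I would construct the pushout of $m$ along $f$ in two stages following this factorization. First, pushing the monomorphism $m$ out along the coproduct inclusion $\iota_1$, Proposition \ref{prop:stab_cod_incl} yields a square that is simultaneously a pushout and a pullback, whose edge $m_1\colon S+T\to A_1$ opposite $m$ is again a monomorphism. Then, pushing this new monomorphism $m_1$ out along the regular epimorphism $[f,1_T]$, Proposition \ref{prop:stab_reg} (this is where exactness of $\C$ is used) yields a further square that is again both a pushout and a pullback, whose edge $n\colon T\to P$ opposite $m_1$ is a monomorphism. By construction the right-hand edge of the first square coincides with the left-hand edge of the second, namely $m_1$, so the two squares may be pasted along it.

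Finally I would paste the two squares:
\[
\xymatrix{
S \ar[r]^{\iota_1}\ar[d]_{m} & S+T \ar[r]^-{[f,1_T]}\ar[d]_{m_1} & T \ar[d]^{n}\\
A \ar[r]_{a} & A_1 \ar[r]_{b} & P.
}
\]
The top composite is $f$, so since both inner squares are pushouts the outer rectangle is a pushout of $m$ along $f$ (pushout pasting lemma), proving existence; since both inner squares are pullbacks the outer rectangle is also a pullback (pullback pasting lemma); and $n$, the pushout of $m$ along $f$, is a monomorphism, proving pushout-stability. This delivers all three assertions at once. The only genuinely delicate point is the reduction itself — recognising that every morphism factors as a coproduct inclusion followed by a \emph{split}, hence regular, epimorphism — which is what channels the general case into the two already-treated ones; the subsequent verifications that $[f,1_T]$ is a regular epimorphism and that the pasting lemmas apply are routine, so I expect no substantial obstacle beyond this observation.
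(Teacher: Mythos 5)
Your proposal is correct and is essentially identical to the paper's own proof: the paper also factors an arbitrary morphism $f$ as $[f,1]\circ\iota_1$ through the coproduct $A+B$ and then successively applies Propositions \ref{prop:stab_cod_incl} and \ref{prop:stab_reg}. Your write-up merely makes explicit the pasting of the two pushout/pullback squares and the observation that $[f,1]$ is split (hence regular) epic, details the paper leaves to the reader.
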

\begin{proof}
Just note that an arbitrary morphism $f:A\to B$ factors as
\[
\xymatrix{
A \ar[r]^-{\iota_1} & A+B \ar[r]^-{[f,1]} & B,
}
\]
and successively apply Proposition \ref{prop:stab_cod_incl} and \ref{prop:stab_reg}.
\end{proof}
\begin{corollary}
Let $\C$ be an exact category satisfying Condition \ref{conditions}.
Monomorphisms are regular in $\C$ and $\C$ is balanced. If $\C$
admits pushouts of regular epimorphisms along regular epimorphisms,
then it is coregular.
\qed 
\end{corollary}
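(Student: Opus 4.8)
The plan is to handle the three assertions in turn, relying throughout on the preceding proposition that pushouts of monomorphisms along arbitrary morphisms exist and are pullbacks, and that monomorphisms are stable under such pushouts.

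First I would show every monomorphism is regular. Given a monomorphism $m:S\to A$, form the pushout of $m$ along itself,
\[
\xymatrix{
S \ar[r]^{m} \ar[d]_{m} & A \ar[d]^{j_1}\\
A \ar[r]_{j_2} & P,
}
\]
which exists and is simultaneously a pullback by the preceding proposition. I claim $m$ is the equalizer of $j_1$ and $j_2$. The square commutes, so $j_1 m = j_2 m$; and if $t:T\to A$ satisfies $j_1 t = j_2 t$, then, the square being a pullback whose two projections are both $m$, there is a unique $s:T\to S$ with $ms=t$, this factorization being unique because $m$ is monic. Hence $(S,m)$ is the equalizer of $(j_1,j_2)$ and every monomorphism is a regular monomorphism. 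Balancedness then follows at once: a morphism that is both monic and epic is, by what we have just shown, the equalizer of some pair $(a,b)$; being epic it forces $a=b$, and an equalizer of a pair of equal morphisms is an isomorphism, so $\C$ is balanced.

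For coregularity I would verify that $\C^{op}$ satisfies the three defining conditions of a regular category. Pullback-stability of regular epimorphisms in $\C^{op}$ is exactly pushout-stability of regular monomorphisms in $\C$; since every monomorphism is regular and monomorphisms are pushout-stable, with the relevant pushouts existing and being pullbacks, this condition holds. The coequalizer of a kernel pair in $\C^{op}$ is the equalizer of a cokernel pair in $\C$: here cokernel pairs exist because, factoring an arbitrary $f=me$ with $e$ a regular epimorphism and $m$ a monomorphism, the cokernel pair of $f$ coincides with that of $m$ (as $e$ is epic), and the latter is just the pushout of $m$ along itself; the required equalizers then exist by finite completeness of $\C$.

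The one remaining condition is finite completeness of $\C^{op}$, equivalently finite cocompleteness of $\C$, and this is where the hypothesis that $\C$ admits pushouts of regular epimorphisms along regular epimorphisms is invoked. I expect the main obstacle to be confirming that these pushouts, together with the binary coproducts of Condition \ref{conditions} and the pushouts of monomorphisms already constructed, supply all of the colimits that $\C^{op}$ requires in order to be regular; once finite cocompleteness is in hand, the stability and factorization data assembled above upgrade $\C^{op}$ to a regular category, so that $\C$ is coregular.
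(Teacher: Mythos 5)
Your first two claims are proved correctly, and by exactly the argument the paper intends (the paper leaves this corollary to the reader, as an immediate consequence of the preceding proposition): since the pushout of a monomorphism $m$ along itself exists and is simultaneously a pullback, $m$ is the equalizer of its cokernel pair, hence regular; and an epic regular monomorphism is an isomorphism, so $\C$ is balanced. Your reduction of coregularity to (i) pushout-stability of regular monomorphisms, (ii) existence of equalizers of cokernel pairs, and (iii) finite cocompleteness of $\C$ is also the right decomposition, and you dispose of (i) and (ii) correctly --- indeed your observation that the cokernel pair of $f=me$ coincides with the pushout of the image $m$ along itself is precisely the paper's next corollary.

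The gap is (iii): you never construct the finite colimits; you only announce that the hypothesis on pushouts of regular epimorphisms ``is invoked'' there and that you ``expect'' the available pieces to suffice. That is exactly the step that needs an argument, and it does go through, by factorization and pasting. Given $f:A\to B$ and $g:A\to C$, write $f=m_fe_f$ and $g=m_ge_g$ with $e_f,e_g$ regular epimorphisms and $m_f,m_g$ monomorphisms (using regularity of $\C$). Push out $e_f$ along $e_g$ (the extra hypothesis); both legs of this pushout are again regular epimorphisms, because if $e$ is the coequalizer of a pair $(a,b)$, then its pushout along $g$, when it exists, is the coequalizer of $(ga,gb)$. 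Next push out the monomorphisms $m_f$ and $m_g$ along these two regular epimorphisms (Proposition \ref{prop:stab_reg}), and finally push out the two resulting monomorphisms against each other (the unlabelled proposition on pushouts of monomorphisms along arbitrary morphisms). Pasting the four pushout squares exhibits the pushout of $f$ along $g$. Coequalizers then exist as well: the coequalizer of $f,g:A\to B$ is the pushout of $[f,g]:A+A\to B$ along $[1_A,1_A]:A+A\to A$, which uses Condition \ref{conditions}(a). One final point that you (and, strictly speaking, the paper) leave untouched: finite cocompleteness also demands an initial object, which Condition \ref{conditions} does not supply, although the paper tacitly assumes one in the proof of Proposition \ref{prop:stab_cod_incl}; this assumption must be added or inherited from the ambient hypotheses.
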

Since the cokernel pair of an arbitrary morphism can be constructed as the pushout
of its regular image along itself we have:
\begin{corollary}
An exact category satisfying Condition \ref{conditions} admits cokernel pairs of arbitrary morphisms.\qed
\end{corollary}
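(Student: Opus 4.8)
The plan is to reduce the existence of cokernel pairs to the proposition just proved, which guarantees that pushouts of monomorphisms along arbitrary morphisms exist. Recall first that the cokernel pair of a morphism $f:A\to B$ is by definition the pushout of $f$ along itself. Since $\C$ is in particular regular, $f$ admits a (regular epi, mono)-factorization $f=me$, where $e:A\to I$ is a regular epimorphism onto the regular image and $m:I\to B$ is a monomorphism.

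Next I would form the pushout of $m$ along $m$. Because $m$ is a monomorphism and is being pushed out along the (arbitrary) morphism $m$, this pushout exists by the preceding proposition; write it as $(Q,j_1,j_2)$ with $j_1m=j_2m$. The remaining step is to identify $(Q,j_1,j_2)$ with the cokernel pair of $f$, which I would do by comparing universal properties. For any pair of morphisms $g_1,g_2:B\to X$ one has $g_1f=g_2f$ if and only if $g_1me=g_2me$, and since $e$ is a regular epimorphism, hence an epimorphism, this holds if and only if $g_1m=g_2m$. Thus a cocone under the pair $(f,f)$ is exactly a cocone under the pair $(m,m)$, so $f$ and $m$ have the very same pushout along themselves; the pushout $(Q,j_1,j_2)$ of $m$ is therefore the cokernel pair of $f$.

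The argument is essentially formal, and the only point requiring any care is the cancellation of the regular epimorphism $e$, which is precisely what makes the two cocone conditions coincide. Once that observation is in place, existence follows immediately from the already-established existence of pushouts of monomorphisms, so I do not anticipate a genuine obstacle: everything rests on the factorization available in any regular category together with the previous proposition.
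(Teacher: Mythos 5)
Your proof is correct and is exactly the paper's argument: the paper justifies this corollary in one sentence by observing that the cokernel pair of $f$ is the pushout of its regular image along itself, which exists by the preceding proposition. Your epi-cancellation step ($g_1f=g_2f$ iff $g_1m=g_2m$) correctly fills in the detail the paper leaves implicit.
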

\begin{proposition}
Let $\C$ be an exact category satisfying Condition \ref{conditions}.
Every reflexive corelation in $\C$ is an co-effective-equivalence-relation.
The category $\C$ is coexact as soon as it admits pushouts of regular epimorphisms along
themselves.
\end{proposition}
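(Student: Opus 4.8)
The plan is to read the statement through the duality between $\C$ and $\C^{op}$: a reflexive corelation is a jointly epic pair $c_1,c_2:A\to C$ equipped with a common retraction $s:C\to A$ (so $sc_1=sc_2=1_A$), which is precisely a reflexive relation in $\C^{op}$, and a co-effective-equivalence-relation is exactly an effective equivalence relation there, i.e.\ a cokernel pair in $\C$. Since cokernel pairs are automatically coequivalence relations (dually, kernel pairs are always equivalence relations), it suffices to exhibit $(C,c_1,c_2)$ as the cokernel pair of a single morphism; symmetry and co-transitivity then come for free. I would therefore reduce the whole first assertion to identifying $(C,c_1,c_2)$ with a concrete cokernel pair and proving that a comparison map is an isomorphism.

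First I would produce the candidate morphism intrinsically in $\C$. As $\C$ is exact it is finitely complete, so the equalizer $q:Q\to A$ of $c_1$ and $c_2$ exists; by the earlier corollary that $\C$ admits cokernel pairs of arbitrary morphisms, the cokernel pair $A\xrightarrow{k_1}P\xleftarrow{k_2}A$ of $q$ exists as well. Because $c_1q=c_2q$ there is a unique comparison $\phi:P\to C$ with $\phi k_1=c_1$ and $\phi k_2=c_2$, and the goal is to show $\phi$ is an isomorphism. Using the earlier corollary that monomorphisms in $\C$ are regular, so that $\C$ is balanced, it is enough to prove $\phi$ is simultaneously a monomorphism and an epimorphism. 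The epimorphism half is immediate: $[c_1,c_2]=\phi\circ[k_1,k_2]$ and $[c_1,c_2]$ is a (regular) epimorphism by definition of a corelation, whence $\phi$ is an epimorphism.

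The monomorphism half is the heart of the proof, and it is where I would rerun, in the dual (feeble pushout) register, the bookkeeping of Proposition \ref{proposition:7} and Proposition \ref{prop:stab_reg}. Concretely, I would assemble the splitting $s$ and the equalizer datum into a tower of commuting squares whose constituent squares are feeble pullbacks by Condition \ref{conditions}(c)--(d) and Lemma \ref{feeble_stable}, so that by Lemma \ref{feeble_composite} the outer rectangle is again a feeble pullback; since the relevant legs are jointly monic (dually, the corelation's injections are jointly epic), Lemma \ref{feeble_is_pullback} then upgrades the feeble pullback to a genuine pullback, forcing the comparison $\phi$ to be monic. Granting this, $\phi$ is monic and epic, hence an isomorphism, so $(C,c_1,c_2)$ is the cokernel pair of $q$; in particular it is effective and, as a cokernel pair, a coequivalence relation. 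This proves the first assertion.

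I expect this monomorphism step to be the main obstacle, since it is exactly the dual of the statement that a reflexive relation is an effective equivalence relation---a Mal'cev--exactness phenomenon---and cannot be quoted from a single earlier lemma but must be extracted by repeating the delicate feeble-(co)limit diagram chase of Proposition \ref{proposition:7}. For the final clause, coexactness of $\C$ means exactness of $\C^{op}$, i.e.\ coregularity of $\C$ together with the requirement that every coequivalence relation be a cokernel pair. The latter is the special case of the first assertion (coequivalence relations are in particular reflexive corelations), while coregularity---equivalently, regularity of $\C^{op}$---follows from the earlier Corollary once the remaining colimits are available; the hypothesis that $\C$ admits pushouts of regular epimorphisms along themselves is precisely what supplies these, completing the verification that $\C$ is coexact.
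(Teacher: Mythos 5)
Your overall reduction is the same as the paper's: exhibit the reflexive corelation $(C,c_1,c_2)$ as the cokernel pair of the equalizer $q:Q\to A$ of $c_1,c_2$. Your epimorphism half of the comparison and your handling of the final coexactness clause are fine. The genuine gap is the step you defer: the ``rerun of the feeble-pullback bookkeeping'' is not a routine repetition of Proposition \ref{proposition:7} that can be waved at --- it \emph{is} the proof, and it has content that your sketch does not identify. Concretely, the paper first uses the retraction $s$ to show that the square with top and left edges $q:Q\to A$ and right and bottom edges $c_2,c_1:A\to C$ is a pullback (if $c_2u=c_1v$ then applying $s$ gives $u=v$, whence $u$ factors uniquely through the equalizer); this is exactly where reflexivity enters, and it is indispensable, since the statement is false for non-reflexive corelations. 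Reflexivity also makes $c_1,c_2$ split monomorphisms, which is what licenses applying Condition \ref{conditions}(c) to them. Stacking these squares using Condition \ref{conditions}(b),(c) and composing with the Condition \ref{conditions}(d) square for $[c_1,c_2]$ yields the key feeble pullback with corner $(A+Q)+(Q+A)$ over $[c_1,c_2]$. Note also that invoking (d) requires $[c_1,c_2]$ to be a \emph{regular} epimorphism, not merely an epimorphism as in your definition of corelation --- a point you never address (it follows from the earlier Corollary: factor $[c_1,c_2]$ as a regular epi followed by a mono and use balancedness).

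Even granting that feeble pullback, the bridge you propose to ``$\phi$ is monic'' is broken. Lemma \ref{feeble_is_pullback} requires the two legs out of $(A+Q)+(Q+A)$ to be jointly monomorphic; you do not verify this, and your parenthetical justification (``the corelation's injections are jointly epic'') concerns different morphisms entirely. More importantly, even a genuine pullback is a statement about the kernel pair of $[c_1,c_2]$, not about $\phi$: to deduce monicity of $\phi$ you would still need the regular-category criterion that $\phi$ is monic iff the kernel pair of $[k_1,k_2]$ coincides with that of $\phi[k_1,k_2]=[c_1,c_2]$, together with the observation that $[k_1,k_2]$ coequalizes the two legs (because $k_1q=k_2q$). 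None of this appears in your write-up. The paper avoids the comparison map altogether: since the regular epimorphism $[c_1,c_2]$ is the coequalizer of its kernel pair, the feeble pullback makes it the coequalizer of the two legs out of $(A+Q)+(Q+A)$, and precomposing with the jointly epimorphic family $\iota_1+\iota_2$, $\iota_1\iota_2$, $\iota_2\iota_1$ identifies it as the coequalizer of $\iota_1q,\iota_2q:Q\to A+A$, i.e.\ as the pushout of $q$ along itself --- no $\phi$, no balancedness. Your route could be repaired along these lines, but as written the central computation is missing and the logical step from it to the conclusion is invalid.
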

\begin{proof}
	Suppose that $[q_1,q_2]: A+A\to Q $ is a reflexive corelation in $\C$, that is, $[q_1,q_2]$ is an epimorphism, such that there exists $e: Q\to A$, such that $e[q_1,q_2]=[1_A,1_A]$. Let $m:S\to A$ be equalizer of $q_1$ and $q_2$. 
Since each of the squares in the diagrams
\[
\xymatrix{
A+S \ar[r]^{1+m} \ar[d]_{1+m} & A+A \ar[d]^{1+q_2} & S+A \ar[r]^{m+1} \ar[d]_{m+1} & A+A \ar[d]^{q_1+1}\\
A+A \ar[r]_{1+q_1} \ar[d]_{[1,1]} & A+Q \ar[d]^{[q_1,1]} & A+A \ar[r]_{q_2+1} \ar[d]_{[1,1]} & Q+A \ar[d]^{[1,q_2]}\\
A\ar[r]_{q_1} & Q & A\ar[r]_{q_2} & Q
}
\]
is a pullback, it follows that their composites are pullbacks. It now follows that, the left
hand square in the diagram
	\[
		\xymatrix@C=14ex{
			(A+S)+(S+A) \ar[r]^-{(1_A+m)+(m+1_A)}\ar[d]^{[1_A,m]+[m,1_A]} & (A+A)+(A+A)\ar[d]^{[q_1,q_2]+[q_1,q_2]} \ar[r]^{[1,1]} & A+A\ar[d]^{[q_1,q_2]} \\
			A+A \ar[r]_{q_1+q_2}& Q+Q \ar[r]_{[1,1]}  & Q 
		}
	\]
is a pullback. Since the right hand square is a feeble pullback, it follows that the outer arrows form a feeble pullback. 
That is the diagram
	\[
		\xymatrix@C=12ex{
			(A+S)+(S+A) \ar[r]^-{[1_A+m,m+1_A]}\ar[d]_{[1_A,m]+[m,1_A]} & A+A\ar[d]^{[q_1,q_2]} \\
			A+A \ar[r]_{[q_1,q_2]} & Q 
		}
	\]
	is a feeble pullback. Since $[q_1,q_2]$ is a regular epimorphism it also the coequalizer of $[1_A,m]+[m+1_A]$ and $[1_A+m,m+1_A]$. Since the morphisms $\iota_1+\iota_2 : A+ A \to (A+S)+(S+A)$ and $\iota_1\iota_2, \iota_2\iota_1 : S \to (A+S)+(S+A)$ are jointly epimorphic and  $([1_A,m]+[m+1_A]) (\iota_1+\iota_2) = 1_{A+A}$ and $([1_A+m,m+1_A])(\iota_1+ \iota_2)=1_{A+A}$, $([1_A,m]+[m+1_A])(\iota_1\iota_2)=\iota_1 m$ and $([1_A+m,m+1_A])(\iota_1\iota_2)=\iota_2m$, and $([1_A,m]+[m+1_A])(\iota_2\iota_1)=\iota_2m$ and $([1_A+m,m+1_A])(\iota_2\iota_1)=\iota_1m$, it follows that $[q_1,q_2]$ is the coequalizer of $\iota_1m$ and $\iota_2m$. It immediately follows that $(Q,q_1,q_2)$ is the pushout of $m$ along $m$.
\end{proof}
Combining the various results we obtain:
\begin{theorem}\label{theorem:1}
Let $\C$ be an exact category satisfying Condition \ref{conditions}.
If $\C$ has finite colimits, then $\C$ is coexact.\qed
\end{theorem}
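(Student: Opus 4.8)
The plan is to unwind the word ``coexact'': $\C$ is coexact precisely when $\C^{\mathrm{op}}$ is Barr-exact, so I would verify the dual axioms in turn, each of which has already been prepared by the preceding results. Recall that $\C^{\mathrm{op}}$ is regular exactly when $\C$ is finitely cocomplete, admits $(\text{epi},\text{regular mono})$-factorizations, and has pushout-stable regular monomorphisms; and that $\C^{\mathrm{op}}$ is exact when, in addition, every equivalence relation in $\C^{\mathrm{op}}$---equivalently, every coequivalence corelation in $\C$---is effective, i.e.\ arises as a cokernel pair.

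First I would dispatch coregularity. Finite cocompleteness is exactly the standing hypothesis, and in particular it supplies pushouts of regular epimorphisms along regular epimorphisms. The $(\text{epi},\text{regular mono})$-factorizations come from the $(\text{regular epi},\text{mono})$-factorizations of the exact category $\C$ together with the earlier corollary identifying monomorphisms with regular monomorphisms, while pushout-stability of regular monomorphisms is precisely the already-established fact that monomorphisms are stable under pushout along arbitrary morphisms. Rather than re-deriving these, I would simply invoke the preceding corollary, which asserts that $\C$ is coregular as soon as it admits pushouts of regular epimorphisms along regular epimorphisms---a hypothesis now guaranteed by the existence of finite colimits.

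It then remains to see that equivalence relations in $\C^{\mathrm{op}}$ are effective. Such a relation is a coequivalence corelation in $\C$, hence in particular a reflexive corelation; and the preceding proposition shows that every reflexive corelation $[q_1,q_2]\colon A+A\to Q$ is the cokernel pair (the pushout of $m$ along $m$) of the equalizer $m$ of $q_1$ and $q_2$, so it is co-effective. Dualizing, the corresponding equivalence relation in $\C^{\mathrm{op}}$ is the kernel pair of its coequalizer, that is, effective. Combining coregularity with this effectiveness yields that $\C^{\mathrm{op}}$ is exact, which is the assertion that $\C$ is coexact.

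The genuine work therefore lies not in this theorem but in the results it combines: the real obstacle was establishing co-effectiveness of reflexive corelations, which rests on Proposition~\ref{proposition:7} and the stability results of Propositions~\ref{prop:stab_reg} and~\ref{prop:stab_cod_incl}, together with the identification of monomorphisms and regular monomorphisms. Here the only point requiring care is bookkeeping: verifying that the finite-colimits hypothesis is exactly strong enough to furnish the pushouts of regular epimorphisms along themselves demanded by the coregularity corollary and the coexactness proposition, so that no further colimit constructions---in particular no transfinite transitive closures, which the reflexive-closure argument of Proposition~\ref{proposition:7} precisely circumvents---are needed.
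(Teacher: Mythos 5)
Your proposal is correct and follows exactly the paper's route: the paper gives no separate argument for this theorem beyond ``combining the various results,'' namely the corollary establishing coregularity once pushouts of regular epimorphisms along regular epimorphisms exist, and the proposition that reflexive corelations are co-effective (hence $\C$ is coexact) once such pushouts exist---both supplied by the finite-colimits hypothesis. Your unwinding of the dual axioms and the bookkeeping about which colimits are needed is precisely the intended content of that combination.
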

On the other hand we have:

\begin{theorem}\label{theorem:2}
If $\C$ is an exact category satisfying Condition \ref{conditions}, then
$\C$ is coprotomodular.
\end{theorem}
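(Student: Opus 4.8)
The plan is to read coprotomodularity as the protomodularity of $\C^{\mathrm{op}}$ and to verify, directly in $\C$, the two clauses of the definition of protomodularity for $\C^{\mathrm{op}}$. Dually, an object of the fibre over $B$ is a split monomorphism $\alpha\colon B\to A$ equipped with a retraction $\beta\colon A\to B$; a morphism from $(\alpha,\beta)$ to $(\alpha',\beta')$ has underlying map $g\colon A\to A'$ in $\C$ with $g\alpha=\alpha'$ and $\beta'g=\beta$; and, for $p\colon B\to E$, the relevant change-of-base functor is the copushout functor sending $(\alpha,\beta)$ to the coprojection $e\colon E\to P$ of the pushout $P=E+_B A$ of $\alpha$ along $p$ (with other coprojection $a\colon A\to P$ and the evident induced retraction). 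The existence clause is immediate: a split monomorphism is a monomorphism, so these pushouts exist and are moreover pullbacks, by the proposition already established. Everything thus reduces to showing that each copushout functor reflects isomorphisms.

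So I would fix $p\colon B\to E$ and a morphism of copoints with underlying map $g\colon A\to A'$, inducing $\bar g\colon P\to P'$ characterised by $\bar g e=e'$ and $\bar g a=a'g$, and assume $\bar g$ is an isomorphism; the goal is that $g$ is one. The first step is to observe that the square
\[
\xymatrix{
A\ar[r]^{a}\ar[d]_{g} & P\ar[d]^{\bar g}\\
A'\ar[r]_{a'} & P'
}
\]
is a pushout. This is pure diagram chasing: the two defining pushout squares for $P$ and $P'$ exhibit the square for $P'$ as the vertical composite of the square for $P$ with the square above, and the pushout cancellation lemma (since the square for $P$ and the composite square for $P'$ are both pushouts) forces the square above to be a pushout as well.

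The second, and decisive, step is to prove that $g$ is a monomorphism. Granting this, the square above is a pushout of the monomorphism $g$ along $a$, hence a pullback by the proposition that pushouts of monomorphisms are pullbacks; and a pullback square in which $\bar g$ is an isomorphism has $g$ an isomorphism, which finishes the proof. To see that $g$ is monic, I would suppose $gu=gv$: applying $a'$ and using $a'g=\bar g a$ together with the fact that $\bar g$ is in particular a monomorphism gives $au=av$, while $\beta=\beta'g$ gives $\beta u=\beta v$. It therefore suffices to know that the coprojection $a\colon A\to P$ and the retraction $\beta\colon A\to B$ are jointly monomorphic.

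I expect this joint-monicity statement to be the main obstacle, and it is where exactness must enter; the naive attempt to get $g$ from a pullback-pasting cancellation fails because that cancellation runs in the wrong direction. The idea is to control the kernel pair of the coprojection $a\colon A\to E+_B A$: by exactness $P$ is the quotient of an effective equivalence relation, and, just as in the explicit constructions of the earlier pushouts, the part of $\ker a$ lying over $A\times A$ is precisely the image under $\alpha\times\alpha$ of the kernel pair of $p$ together with the diagonal. Since $\beta\alpha=1_B$, this relation meets $\ker\beta$ only in the diagonal, so $\langle\beta,a\rangle\colon A\to B\times P$ is monic. Assembling the pieces, $g$ is a monomorphism, the comparison square is a pullback, and $\bar g$ being an isomorphism forces $g$ to be one; hence every copushout functor reflects isomorphisms and $\C$ is coprotomodular. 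Notably, this route uses neither a terminal object nor the existence of all finite colimits, only the pushouts of monomorphisms already shown to exist.
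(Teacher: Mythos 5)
Your overall architecture is sound and genuinely different from the paper's: you verify reflection of isomorphisms directly for an arbitrary $p\colon B\to E$ (avoiding the paper's reduction, via the dual of the initial-object characterization, to the single morphism $B\to 1$, and hence avoiding any appeal to a terminal object or finite colimits), and you reduce everything to: the comparison square is a pushout (correct, by pushout cancellation), pushouts of monomorphisms are pullbacks (the paper's proposition), pullbacks of isomorphisms are isomorphisms, plus the claim that $a\colon A\to P$ and $\beta\colon A\to B$ are jointly monomorphic. The gap is in that last claim, which you rightly flag as the main obstacle but then dispatch with an argument that does not work at this level of generality. There are two problems. First, the kernel pair of $a$ is not given by ``the explicit constructions of the earlier pushouts'': Propositions \ref{prop:stab_reg} and \ref{prop:stab_cod_incl} cover pushouts along regular epimorphisms and along coproduct inclusions, whereas for general $p$ the pushout is built through $A+E$ and $a$ is the composite $A\to A+E\to P$, so its kernel pair is the restriction along $\iota_1$ of an equivalence relation on $A+E$, which Condition \ref{conditions} does not let you compute outright. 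This part is repairable: factor $p$ as a regular epimorphism followed by a monomorphism, use pushout-stability of monomorphisms to see the mono stage does not change the kernel pair, and then Proposition \ref{proposition:7} yields a regular epimorphism $[\kappa,\Delta]\colon K+A\to \ker a$, where $K$ is the kernel pair of $p$ and $\ker a$ denotes the kernel pair of $a$.

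Second, and fatally as written: from ``$\ker a$ is the image of $K+A\to A\times A$'' you conclude that this relation meets $\ker\beta$ only in the diagonal by, in effect, splitting a generalized element of the image into the case where it comes from $K$ and the case where it comes from $A$. Generalized elements of a coproduct need not factor through a coprojection unless the category is extensive; additive categories satisfy Condition \ref{conditions}, and there an element of the image of $K\oplus A\to A\times A$ is a sum of contributions from both summands, so the case analysis is unavailable (the joint monicity is still true there, but only because one can subtract). Making this step rigorous requires exactly the technical core of the paper's own proof: set $N=\ker a\wedge\ker\beta$, observe that the pullback of the monomorphism $N\hookrightarrow\ker a$ along $\kappa\colon K\to\ker a$ is the equalizer of the two projections of $K$ (this is where $\beta\alpha=1_B$ enters), and then combine Condition \ref{conditions}(b) (a coproduct of two pullbacks), Condition \ref{conditions}(c) (applied to the monomorphism $N\to\ker a$), and pullback-stability of regular epimorphisms to produce a jointly epimorphic pair into $N$ on which the two projections of $N$ agree, forcing $N$ to be the diagonal. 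That is precisely the paper's pullback argument with its $T$ (the kernel pair of $f$) replaced by your $N$. So your route can be completed, and it is an attractive reorganization, but the step your sketch waves through is the entire difficulty of the theorem, not a loose end.
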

\begin{proof}
Suppose that $\C$ is an exact category satisfying Condition \ref{conditions}.
Consider a diagram
\[
\xymatrix@C=2ex@R=3ex{
&& A\ar[rrr]^{j'} &&& C'\\ 
A\ar[urr]^{f} \ar[rrr]^(0.7){j} &&&C \ar[urr]^{h}\\
\\
&B\ar[uul]^{\alpha} \ar[uuur]_(0.4){\alpha'}\ar[rrr] &&& 1\ar[uul]^{\sigma}\ar[uuur]_{\sigma'}\\
}
\]
in $\C$ where $\alpha$ and $\alpha'$ are monomorphisms, $f$ is an epimorphism,
$(C,j,\sigma)$ and $(C',j'\sigma')$ are pushouts of $\alpha$ and $\alpha'$ along $B\to 1$, respectively,
and $h$ is the induced morphism between these pushouts.
As explained in the introduction and dualizing, to show $\C$ is coprotomodular, it is sufficient to show that for each
diagram as above if $h$ is an isomorphism, then $f$ is an isomorphism.
To that end suppose $h$, in the previous diagram, is an isomorphism and consider the commutative diagram
\[
\xymatrix@C=2ex@R=3ex{
&& A'\ar[rrr]^(0.6){q'}\ar@/^3ex/[rrrrrr]^{j'} &&& Q'\ar[rrr]^(0.4){m'} &&& C'\\ 
A\ar[urr]^{f} \ar[rrr]_(0.75){q}\ar@/^3ex/[rrrrrr]^{j} &&& Q \ar[urr]|(0.45){\hole}^(0.6){g} \ar[rrr]_(0.7){m} &&& C \ar[urr]^{h}\\
\\
&B\ar[uul]^{\alpha} \ar[uuur]_(0.4){\alpha'}\ar[rrr]^{p}& && S \ar[uul]^{\rho} \ar[uuur]_(0.4){\rho'}\ar[rrr] &&& 1\ar[uul]^{\sigma}\ar[uuur]_{\sigma'}\\
}
\]
obtained as follows. The morphisms at the bottom of the diagram form the (regular epi,mono)-factorization of $B\to 1$, and the remaining part is obtained by pushing out $\alpha$ and $\alpha'$ along $p$ to obtain $\rho$ and $\rho'$, respectively, and inserting the morphisms induced by the universal property of these pushouts.
Note that since epimorphisms are regular epimorphisms, and monomorphism and regular epimorphisms are pushout stable,
we have that $m$ and $m'$ are monomorphisms, and $q$, $q'$ and $g$ are regular epimorphisms.
It easily follows that $g$ is an isomorphism (being both a regular epimorphism and a momomorphism).
On the other hand, it is easy to check that the kernel pair $(R,r_1,r_2)$ of $q$, is the smallest equivalence relation containing $(B\times B, \alpha\pi_1, \alpha\pi_2)$. By Proposition \ref{proposition:7}, it follows that there is a regular epimorphism $e: (B\times B) + A \to R$ making the diagram
\[
\xymatrix{
(B\times B) + A \ar[dr]|{\langle [\alpha \pi_1,1_A], [\alpha \pi_2,1_A]\rangle} \ar[rr]^{e} && R \ar[dl]^{\langle r_1,r_2\rangle}\\
	& A\times A
}
\]
	commute. Let $(T,t_1,t_2)$ be the kernel pair of $f$ and let $s:A\to T$ be the unique morphism with $t_1s=1_A=t_2s$. Since $g$ is a isomorphism it follows that $qt_1 = qt_2$ and hence there exists $u: T \to R$ such that $r_i u = t_i$. Since the right hand square
	and the outer arrows of the diagram
	\[
		\xymatrix{
			B \ar@/^3ex/[rr]^{\alpha'}\ar@{-->}[r]_{\nu}\ar[d]_{\langle 1,1\rangle} & T \ar[r]_{\tilde f}\ar[d]^{\langle t_1,t_2 \rangle} & A'\ar[d]^{\langle 1,1\rangle}\\
			B\times B \ar@/_3ex/[rr]_{\alpha'\times \alpha'}\ar[r]^{\alpha\times \alpha} & A\times A \ar[r]^{f\times f} & A'\times A'
		}
	\]
	are pullbacks it follows that the left hand square is too. Therefore, since the
	triangles and the outer arrows in the diagram
	\[
		\xymatrix{
			B \ar[r]^{\nu}\ar[d]_{\langle 1,1\rangle} & T \ar[d]^{u} \ar@/^5ex/[ddr]^{\langle t_1,t_2\rangle}\\
			B\times B \ar[r]^{e\iota_1} \ar@/_5ex/[drr]_{\alpha\times \alpha} & R\ar[dr]^{\langle r_1,r_2\rangle}\\
			& & A\times A
		}
	\]
	commute and $\langle r_1,r_2\rangle$ is a monomorphism it follows that the square
	in the previous diagram is a pullback. It follows that both squares in the diagram
	\[
		\xymatrix{
			B+A\ar[d]_{\langle 1,1\rangle + 1} \ar[r]^{\nu + s} & T+T \ar[r]^{[1,1]}\ar[d]^{u+1} & T\ar[d]^{u}\\
			(B\times B) + A \ar[r]_{e\iota_1+s} & R+ T \ar[r]_{[1,u]} & R
		}
	\]
	are pullbacks, and hence we obtain a pullback:
	\[
		\xymatrix{
			B+A \ar[d]_{\langle 1,1\rangle+1} \ar[r]^{[\nu,s]} & T \ar[d]^{u}\\
			(B\times B) + A \ar[r]_-{e} & R.
		}
	\]
	Since $\iota_1$ and $\iota_2$ are jointly epimorphic and $[\nu,s]$ is an epimorphism
	it follows that $\nu$ and $s$ are jointly epimorphic. Therefore, since $t_1\nu=\alpha=t_2\nu$
	and $t_1s=1_T=t_2s$ 
	it follows that $t_1=t_2$ and hence $f$ is an isomorphism.
\end{proof}
Recall that a Mal'tsev category is arithmetical \cite{PEDICCHIO:1996} if 
each lattice of equivalence relations on an object in it is distributive.
\begin{theorem}\label{main-theorem}
	The opposite category of a pretopos with finite colimits is arithmetical and ideally exact.
\end{theorem}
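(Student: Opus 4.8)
The plan is to assemble the results of the previous sections and then reduce the \emph{arithmetical} assertion to the distributivity of subobject lattices in a pretopos. Write $\C$ for a pretopos with finite colimits. Being exact, $\C$ is finitely complete; being exact and extensive it is lextensive, so by the Proposition that every lextensive category satisfies Condition~\ref{conditions}, $\C$ is an exact category satisfying Condition~\ref{conditions} which in addition has all finite colimits. Theorem~\ref{theorem:1} then shows that $\C$ is coexact, i.e. $\C^{\mathrm{op}}$ is exact, and Theorem~\ref{theorem:2} shows that $\C$ is coprotomodular, i.e. $\C^{\mathrm{op}}$ is protomodular. Moreover $\C^{\mathrm{op}}$ is finitely cocomplete because $\C$ is finitely complete.

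Next I would check the remaining clause of ideal exactness: that the unique morphism from the initial to the terminal object of $\C^{\mathrm{op}}$ is a regular epimorphism. Under the identification of objects this morphism is the image in $\C^{\mathrm{op}}$ of the canonical map $0\to 1$ of $\C$, so the claim is that $0\to 1$ is a regular monomorphism in $\C$. Since $\C$ is extensive, the initial object is strict and the coproduct inclusion $0\to 0+1\cong 1$ is a monomorphism; by the Corollary that monomorphisms are regular in an exact category satisfying Condition~\ref{conditions}, it is a regular monomorphism. Hence $\C^{\mathrm{op}}$ is exact, protomodular, finitely cocomplete, and has $0\to 1$ a regular epimorphism, i.e. it is ideally exact.

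It remains to prove that $\C^{\mathrm{op}}$ is arithmetical. First, $\C^{\mathrm{op}}$ is a Mal'tsev category: a reflexive relation in $\C^{\mathrm{op}}$ is exactly a reflexive corelation in $\C$, and the Proposition that every reflexive corelation is a co-effective-equivalence-relation says precisely that every such relation is an (effective) equivalence relation. For the distributivity of the equivalence relation lattices, the key observation is that the proof of that Proposition sets up a correspondence between reflexive corelations $[q_1,q_2]\colon A+A\to Q$ on $A$ and subobjects $m\colon S\to A$ of $A$: one passes from a corelation to the equalizer $m$ of $q_1$ and $q_2$, and back by forming the pushout $Q$ of $m$ along itself. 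Using that monomorphisms are pushout stable and that pushouts of monomorphisms are pullbacks, I would verify that these assignments are mutually inverse and order-reversing, so that the lattice of equivalence relations on $A$ in $\C^{\mathrm{op}}$ (equivalently, of coequivalence corelations on $A$ in $\C$) is anti-isomorphic to the lattice of subobjects of $A$ in $\C$. A pretopos is coherent, so each subobject lattice of $\C$ is a distributive lattice; since distributivity is a self-dual property, the opposite lattice is distributive as well, and therefore $\C^{\mathrm{op}}$ is arithmetical.

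The main obstacle is this last step: establishing rigorously that the corelation--subobject correspondence is a lattice anti-isomorphism. One must check that forming the equalizer and then the pushout returns the original subobject (for which the stability of monomorphisms under pushout and the fact that such pushouts are pullbacks are exactly what is needed), that distinct subobjects yield non-isomorphic corelations, and that the comparison morphisms of corelations correspond precisely to inclusions of subobjects. Once this bookkeeping is complete, distributivity transports along the anti-isomorphism and, together with the ideal exactness established above, completes the proof.
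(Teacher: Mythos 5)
Your proposal is correct and follows essentially the same route as the paper: combine Theorems \ref{theorem:1} and \ref{theorem:2}, note that $0\to 1$ in a pretopos is a monomorphism (hence a regular monomorphism by the Corollary), and transport distributivity of subobject lattices in a pretopos across the bijection between subobjects of $A$ and co-equivalence-relations on $A$. You even fill in details the paper leaves implicit (the coproduct-inclusion argument that $0\to 1$ is monic, and the coherence argument for distributivity of subobject lattices), but the structure of the argument is identical.
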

\begin{proof}
	Let $\C$ be a pretopos.
	Combining Theorems \ref{theorem:1} and \ref{theorem:2}, we need only show that the unique morphism from $0\to 1$ in 
	$\C$ is a monomorphism, and that each lattice of co-equivalence-relations on an object in $\C$ is distributive. 
	The former is well-known and follows easily from the fact that $0$ is strict. To see why, just notice that each of the
	projections of the kernel pair of any morphism with domain $0$ must be an isomorphism.
	The latter fact follows from the fact that for each object $A$ the lattice of co-equivalence-relation on $A$ is isomorphic 
	to the subobject lattice of $A$, which is distributive (since every pretopos is coherent \cite{JOHNSTONE:2002}).
\end{proof}

\section*{Acknowledgements}
The author would like to thank Graham Manuel for proposing the question of whether pretoposes are ideally exact, as well as the Stellenbosch Category Theory research group for helpful discussions on the subject of this article.

\providecommand{\bysame}{\leavevmode\hbox to3em{\hrulefill}\thinspace}
\providecommand{\MR}{\relax\ifhmode\unskip\space\fi MR }
\providecommand{\MRhref}[2]{%
  \href{http://www.ams.org/mathscinet-getitem?mr=#1}{#2}
}
\providecommand{\href}[2]{#2}

\end{document}